\newtheorem{theorem}{Theorem}
\newtheorem{corollary}{Corollary}
\newtheorem{lemma}{Lemma}
\newtheorem{proposition}{Proposition}
\newtheorem{definition}{Definition}
\newtheorem{remark}{Remark}
\newtheorem{example}{Example}
\newcommand{\CC}{\mathbb{C}}
\newcommand{\KK}{\mathbb{K}}
\newcommand{\NN}{\mathbb{N}}
\newcommand{\RR}{\mathbb{R}}
\newcommand{\OO}{\widetilde{\Omega}}
\newcommand{\TL}{L}
\def\a{\mathsf{a}}
\def\b{\mathsf{b}}
\def\c{\mathsf{c}}
\def\u{\mathsf{u}}
\def\v{\mathsf{v}}
\def\w{\mathsf{w}}
\def\z{\mathsf{z}}
\def\En{\mathcal{E}_n}
\def\P{\mathrm{P}}
\newcommand{\E}{\mathcal{E}}
\newcommand{\LL}{\mathbb{L}}
\newcommand{\LLL}{\mathfrak{L}}
\def\a{\mathsf{a}}
\def\b{\mathsf{b}}
\def\c{\mathsf{c}}
\def\u{\mathsf{u}}
\def\v{\mathsf{v}}
\def\w{\mathsf{w}}
\def\1{{\bf 1}}
\begin{document}

\title{Two parameters  bt--algebra and   invariants for links and tied links}

\author{F. Aicardi}
 \address{ Sistiana Mare 56,  34011  Trieste, Italy.}
\email{francescaicardi22@gmail.com}
 \author{J. Juyumaya}
 \address{IMUV,
 Gran Breta\~{n}a 1111, Valpara\'{i}so 2340000, Chile.}
 \email{juyumaya@gmail.com}

\thanks{The second  author was  in part supported   by the Fondecyt Project  1180036}

\subjclass{57M25, 20C08, 20F36}

\begin{abstract}
We introduce a two--parameters bt--algebra which,  by specialization,   becomes
the one--parameter bt--algebra, introduced by the authors, as well as
another one--parameter presentation of it; the invariant    for  links and  tied links,
 associated to this two--parameter algebra via Jones recipe,
contains  as specializations the invariants obtained from these two
presentations of the bt--algebra and then   is more powerful than each of
them.  Also, a new non Homflypt   polynomial
invariant   is obtained for links, which is related to  the linking matrix.
\end{abstract}

\maketitle

\section{Introduction}
The bt--algebra is    a  one--parameter finite dimensional  algebra defined by generators and relations, see \cite{ajICTP1, rhJAC}.
In  \cite{maIMRN}  it is  shown  how   to  associate to  each  Coxeter group  a certain algebra, and in  the case of   the Weyl group of type $A$ this  algebra  coincides
with the bt--algebra;  this may open new perspectives for the  study  of  the bt--algebra  in knot theory,   cf. \cite{flJPAA}.  The representation theory of the bt--algebra has been studied in \cite{rhJAC, esryJPAA, jaPoJLMS}.
 \smallbreak
For every positive integer $n$, we denote by $\En (\u )$ the bt--algebra  over $\CC (\u)$, with parameter $\u$. The original definition of $\En (\u )$ is by braid generators $T_1, \ldots , T_{n-1}$ and tie  generators $E_1,\ldots ,E_{n-1}$, satisfying the defining generators of the tied braid monoid defined in \cite[Definition 3.1 ]{aijuJKTR1} together with the polynomial relation
$$
T_i^2 = 1 + (\u -1) E_i + (\u-1) E_iT_i, \quad \text{for all $i$}.
$$

 It is known that the bt--algebra is a knot algebra: indeed,  in \cite{aijuMMJ1} we have defined a three--variable polynomial invariant for classical links which is denoted by $\bar\Delta$; this invariant   was constructed  originally  by using the method - also called  Jones recipe - that Jones introduced  to construct the Homflypt polynomial \cite{joAM}.
\smallbreak
In \cite{chjukalaIMRN}    another presentation for the bt--algebra is considered. More precisely,  denote by $\sqrt{\u}$ a variable such that $(\sqrt{\u})^2 =\u $: the new presentation of the bt--algebra is now over $\CC(\sqrt{\u})$ and is presented by the same  tie  generators $E_i$'s  but the generators $T_i$'s are replaced  by braid generators $V_i$'s,  still
 satisfying all original defining relation  of  the    $T_i$'s   with exception of the polynomial relation above which is replaced  by
$$
V_i^2 = 1 + (\sqrt{\u}- \sqrt{\u}^{\,-1}) E_iV_i, \quad \text{for all $i$}.
$$
We denote by $\E (\sqrt{\u})$ the bt--algebra with this new presentation. Now, again, by using the Jones recipe on the bt--algebra but with the presentation $\E (\sqrt{\u})$,  a  three--variable polynomial invariant for classical links is constructed in \cite{chjukalaIMRN}; this invariant is denoted by $\Theta$.
\smallbreak
It was noted by the first author that $\bar\Delta$ and $\Theta$ are not topologically equivalent, see \cite{aica}, cf. \cite{chjukalaIMRN}; this is an amazing fact that shows the   subtlety  of the Jones recipe. In fact,
the main motivation of this note is to understand the relation between the invariants $\bar\Delta$ and $\Theta$. To do that we introduce a bt--algebra  with two commuting parameters $\u$ and $\v$, denoted by $\En(\u, \v)$, presented by the tie   generators $E_i$'s and braid generators $R_i$'s, subject to the same monomial relations as the bt--algebra and the polynomial relations
$$
R_i^2 = 1 + (\u -1) E_i + (\v-1) E_iR_i, \quad \text{for all $i$}.
$$
  Similarly to what happens for  the two--parameters   Hecke algebra   \cite[Subsection 4.2]{katu}, the bt--algebra with two parameters is isomorphic to the bt--algebra with one parameter, see Proposition \ref{Isomorphism}; this fact allows to define a Markov trace on $\En(\u, \v)$ (Proposition \ref{MarkovTrace}).  Consequently, we apply the Jones recipe to the bt--algebra with two parameters, obtaining  a   four--variable invariant  polynomial,  denoted by $\Upsilon$, for classical links as well its  extension $\widetilde{\Upsilon}$ to tied links \cite{aijuJKTR1}.  As  it  will be  observed  in Remark \ref{Specializations},  specializations of the parameters in   $\En(\u,\v)$ yields $\En(\u)$ and  $\En(\sqrt{\u})$; therefore,  the respective
 specializations  of $\Upsilon$ yields  the invariants $\bar\Delta$ and $\Theta$; this gives an answer to the initial question that
motivated this work.
\smallbreak
  In   Section \ref{TiedUpsilon} we define $\widetilde{\Upsilon}$ by skein relations.  We also give a close look  to the specialization of
$\widetilde{\Upsilon}$ at $\v=1$, which is denoted by
$\widetilde{\Omega}$. In Theorem \ref{T4}  we     show some  properties
of $\widetilde{\Omega}$     by introducing   a generalization of the linking number to
tied links.   Finally, in Section \ref{Computations} we give a table comparing the invariant $\Upsilon$ and its specializations considered here.
  Section \ref{Digression} is a digression on the bt--algebra, at one and two parameters, in comparison  with two different presentations of the  Hecke algebra.

 \section{Preliminaries}
Here,   $\KK$--algebra  means   an associative algebra, with unity $1$, over the field $\KK$.

\subsection{}
 As usual we denote by $B_n$ the braid group on $n$--strands. The Artin presentation of $B_n$ is by the braids  generators $\sigma_1,\ldots ,\sigma_{n-1}$ and the relations: $\sigma_i\sigma_j=\sigma_j\sigma_i$, for $\vert i-j\vert >1$ and $\sigma_i\sigma_j\sigma_i = \sigma_j\sigma_i\sigma_j$, for $\vert i-j\vert =1$.
An extension of the braid group $B_n$ is the {\it monoid of tied braids} $TB_n$, which is a master piece in   the study  of tied links.

\begin{definition}\cite[Definition 3.1]{aijuJKTR1}
$T\!B_n$ is  the monoid presented by  the usual  braids generators
 $\sigma_1, \ldots , \sigma_{n-1}$  together with  the tied generators   $\eta_1, \ldots ,\eta_{n-1}$ and  the relations:
\begin{eqnarray}
\label{eta1}
\eta_i\eta_j & =  &  \eta_j \eta_i \qquad \text{ for all $i,j$},\\
\label{eta2}
 \eta_i\eta_i & = & \eta_i \qquad \text{ for all $i$},\\
 \label{eta3}
\eta_i\sigma_i  & = &   \sigma_i \eta_i \qquad \text{ for all $i$},\\
\label{eta4}
\eta_i\sigma_j  & = &   \sigma_j \eta_i \qquad \text{for $\vert i  -  j\vert >1$},\\
\label{eta5}
\eta_i \sigma_j \sigma_i & = &   \sigma_j \sigma_i \eta_j \qquad\text{ for $\vert i - j\vert =1$},\\
\label{eta6}
\eta_i\eta_j\sigma_i & = & \eta_j\sigma_i\eta_j  \quad = \quad\sigma_i\eta_i\eta_j \qquad \text{ for $\vert i  -  j\vert =1$},\\
\label{eta7}
\sigma_i\sigma_j  & = &   \sigma_j \sigma_i \qquad \text{for $\vert i  -  j\vert >1$},\\
\label{eta8}
\sigma_i \sigma_j \sigma_i & = &   \sigma_j \sigma_i \sigma_j \qquad\text{ for $\vert i - j\vert =1$},\\
\label{eta9}
\eta_i\sigma_j\sigma_i^{-1} & = &  \sigma_j\sigma_i^{-1}\eta_j \qquad \text{ for $\vert i  -  j\vert =1$}.
\end{eqnarray}
\end{definition}

\subsection{}
Set $\u$ a variable: the bt--algebra $\En(\u)$  \cite{ajICTP1, rhJAC, aijuMMJ1} can be conceived as the  quotient algebra of the monoid algebra of $TB_n$  over $\CC(\u)$, by the two--sided ideal generated by
$$
\sigma_i^2 - 1-(\u-1)\eta_i(1+\sigma_i),\quad \text{for all}\quad i.
$$
See \cite[Remark 4.3]{aijuJKTR1}. In other words, $\En(\u)$  is the $\CC(\u)$--algebra generated by $T_1,\ldots ,T_{n-1}$, $E_1,\ldots ,E_{n-1}$ satisfying the relations (\ref{eta1})--(\ref{eta8}), where $\sigma_i$ is replaced by $T_i$ and $\eta_i$ by $E_i$, together with the relations
\begin{equation}\label{FirstQuadratic}
T_i^2 = 1 + (\u-1)E_i + (\u-1)E_iT_i, \quad \text{for all}\quad i.
\end{equation}
We  consider now another  presentation of the bt--algebra,    used in \cite{chjukalaIMRN, maIMRN}.   Let  $\sqrt{\u}$ be a variable s.t. $\sqrt{\u}^2=\u$. We denote by $\En(\sqrt{\u})$ the bt--algebra presented by the generators  $V_1,\ldots , V_{n-1}$ and $E_1, \ldots, E_{n-1}$, where
$$
V_i:= T_i  + \left( \frac{1}{\sqrt{\u}}-1 \right) E_iT_i.
$$
 The $V_i$'s still satisfy the defining relations   (\ref{eta1}) to (\ref{eta8}), substituting $\sigma_i$ with $V_i$, $\eta_i$ with $E_i$, but equation (\ref{FirstQuadratic}) becomes
\begin{equation}\label{SecondQuadratic}
V_i^2 = 1 +   \left(\sqrt{\u} -\frac{1}{\sqrt{\u}}\right) E_i V_i, \quad \text{for all}\quad i.
\end{equation}

\subsection{}
  Tied links were introduced in \cite{aijuJKTR1} and roughly correspond   to  links  which may have   ties connecting pairs of  points  of  two  components or of the  same  component.      The   ties   in the diagrams  of the tied links  are   drawn  as  springs,  to outline   the fact  that they  can be contracted  and  extended,  letting  their  extremes  to  slide  along the  components.   The  ties  define a partition of  the set of components in this  way: two  components  connected by a  tie belong to  the same  part of the  partition.
   Every  classical link can be considered as a  tied  link  without ties; in this  case  each  component form a  distinct part of  the partition. Alternatively,  a classical  link can  be considered as a  tied link  in which all components form a   sole part.

We denote by $ \mathfrak{L}$ the set of classical  links in  $\RR^3$ and by $\widetilde{ \mathfrak{L}}$ the set of tied links.  As we have just recalled,  $ \mathfrak{L}\subset \widetilde{ \mathfrak{L}}$,  but  the set $\mathfrak{L}$   can be identified  also with  the subset  $\widetilde{ \mathfrak{L}}^*$  of $\widetilde{ \mathfrak{L}}$, formed by the tied links whose components are all tied. In terms of braids, the situation is as follows. Recall that the tied links are in bijection with  the equivalence classes of $TB_{\infty}$ under the t--Markov moves \cite[Theorem 3.7]{aijuJKTR1}. Now, observe  that
    $B_n$ can be   naturally considered as a submonoid  of $TB_n$ and    the t--Markov moves at level of $B_n$ are  the classical Markov moves: this  implies the inclusion $ \mathfrak{L}\subset \widetilde{ \mathfrak{L}}$. On the other hand,  the group $B_n$  is isomorphic, as group, to the submonoid $EB_n$ of $TB_n$,
$$
EB_n: =\{\eta^n\sigma \,;\, \sigma\in B_n\},\qquad \eta^n:=\eta_1\cdots \eta_{n-1},
$$
where the group  isomorphism from $EB_n$ to $B_n$, denoted by $\mathtt{f}$,  is given by $\mathtt{f}(\eta^n\sigma)=\sigma$. Moreover,   two tied braids of $EB_n$  are t--Markov equivalent  if and only if their images by
$\mathtt{f}$ are Markov equivalent. This  explains, in terms of braids,   the identification between   $ \mathfrak{L}$ and $\widetilde{ \mathfrak{L}}^*$  mentioned above.   For  more details see \cite[Subsection 2.3]{aijuMathZ}.
\subsection{}\label{NotationFact}
Invariants for classical and tied links  were constructed by using the bt--algebra in the Jones recipe   \cite{joAM}. We   recall some  facts and   introduce    some notations  for  these invariants:
\begin{enumerate}
\item   $\Delta$ and $\widetilde{\Delta}$  denote respectively  the three--variable    invariant for classical links and tied links, defined through the original  bt--algebra.  The invariant $\Delta$, called $\bar \Delta$ in  \cite{aijuMMJ1},   is  the   restriction of $\widetilde{\Delta}$  to $   \mathfrak{L}$;   the invariant $ \widetilde{\Delta}$ was defined in \cite{aijuJKTR1}, where was denoted $\mathcal{F}$.
\item  $\Theta$ and $\widetilde{\Theta}$  denote respectively the three--variable   invariant for classical links and tied links, defined   in \cite{chjukalaIMRN}; the original notation for $\widetilde{\Theta}$ was $\overline{\Theta}$.    Notice that  the  invariant $\Theta$ is the  restriction of  $\widetilde{\Theta}$  to   $ \mathfrak{L}$.
\item  The    invariants  $\widetilde{\Delta}$ and   $\widetilde{\Theta}$, restricted   to $\widetilde{ \mathfrak{L}}^*$,   coincide  with  the Homplypt polynomial, which is denoted by $\P =\P(t,x)$; we keep the   defining skein  relation of $\P$ as in \cite[Proposition 6.2]{joAM}.
\item The invariants $\Delta$ and $\Theta$ coincide with the Homplypt polynomial,   whenever they are evaluated on knots; however they distinguish pairs of links that are not distinguished by $\P$.   See  \cite[Theorem 8.3]{chjukalaIMRN} and  \cite[Proposition 2]{aica}.
\item It is intriguing to note that despite the only difference in the construction  $\Delta$ and $\Theta$ is the presentation used for the bt--algebra,   these invariants are not topologically equivalent,   see \cite{aica, chjukalaIMRN}.

\end{enumerate}

\section{The two--parameters bt--algebra}

\subsection{}
 Let $\v$ be  a variable commuting  with  $\u$,  and  set ${\Bbb K}= {\Bbb C}(\u,\v)$.
\begin{definition} [See \cite{ajICTP1, rhJAC, aijuMMJ1}] The two--parameter bt--algebra, denoted by $\E_n(\u,\v)$, is defined by $\E_1(\u,\v):=\KK$  and, for $n>1$, as   the unital associative $\KK$--algebra, with unity $1$,   presented  by the braid generators $R_1,\ldots , R_{n-1}$ and the tie generators $E_1,\ldots , E_{n-1}$, subject to the following relations:
\begin{eqnarray}
\label{bt1}
E_iE_j & =  &  E_j E_i \qquad \text{ for all $i,j$},
\\
\label{bt2}
E_i^2 & = &  E_i\qquad \text{ for all $i$},
\\
\label{bt3}
E_i R_j  & = &
 R_j E_i \qquad \text{for $\vert i  -  j\vert >1$},
\\
\label{bt4}
E_i R_i & = &   R_i E_i  \qquad \text{ for all $i$},
\\
\label{bt5}
E_iR_jR_i & = &  R_jR_iE_j \qquad \text{ for $\vert i  -  j\vert =1$},
\\
\label{bt6}
E_iE_jR_i & = & E_j R_i E_j  \quad = \quad R_iE_iE_j \qquad \text{ for $\vert i  -  j\vert =1$},
\\
\label{bt7}
R_i R_j & = & R_j R_i\qquad \text{ for $\vert i - j\vert > 1$},
 \\
 \label{bt8}
R_i R_j R_i& = & R_j R_iR_j\qquad \text{ for $\vert i - j\vert = 1$},
 \\
 \label{bt9}
R_i^2 & = & 1  + (\u-1)E_i + (\v-1)E_i R_i\qquad \text{ for all $i$}.
\end{eqnarray}
\end{definition}

Notice that every  $R_i$ is invertible,  and
\begin{equation}\label{Tinverse}
R_i^{-1} = R_i + (1-\v)\u^{-1}E_i + (\u^{-1} - 1)E_iR_i.
\end{equation}

\begin{remark}\rm
  The algebra $\E_n(\u,\v)$ can be conceived as the quotient of the monoid algebra  of  $TB_n$, over $\KK$, by the two--sided  ideal  generated by all expressions  of the form $\sigma_i^2 - 1  - (\u-1)\eta_i - (\v-1)\eta_i \sigma_i$, for all $i$.
\end{remark}
\begin{remark}\label{Specializations}\rm
Observe that the original bt--algebra $\E_n(\u)$   is obtained as   $\E_n(\u,\u)$,    while  the presentation   $\E_n(\sqrt{\u})$   corresponds to  $\E_n(1,\v)$,   with  $\v=\sqrt{\u} - \sqrt{\u}\,^ {-1}+1$.
\end{remark}

\subsection{}
 We  show  here  that the new two--parameters algebra is  isomorphic to  the original
bt--algebra.

Let  $\delta$ be  a root of the quadratic polynomial

\begin{equation}\label{QuadraticDelta}
   \u (\z+1)^2-  (\v-1) (\z  +1) -1.
\end{equation}

 Define the elements  $T_i$'s by
\begin{equation}\label{TiRi}
T_i:= R_i + \delta E_iR_i,\quad \text{for all}\quad i.
\end{equation}
\begin{proposition}\label{Isomorphism}
The $\LL$--algebras $\En (\u ,\v)\otimes_{\KK} \LL$ and $\En( \u ( \delta+1)^2  )$,   are isomorphic through the mappings $R_i\mapsto T_i$, $E_i\mapsto E_i$,     where  $\LL$  is the smaller  field containing $\KK$  and   $\delta$.
 \end{proposition}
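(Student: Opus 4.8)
The plan is to check that the elements $T_i:=R_i+\delta E_iR_i$ of $\En(\u,\v)\otimes_\KK\LL$, together with the idempotents $E_i$, satisfy the defining relations of the one--parameter bt--algebra $\En(\u(\delta+1)^2)$, with the $T_i$'s playing the role of the braid generators, and then to verify that the resulting homomorphism is bijective. Since $E_iR_i=R_iE_i$ one has $T_i=(1+\delta E_i)R_i=R_i(1+\delta E_i)$, and $1+\delta E_i$ is a unit with inverse $1-\tfrac{\delta}{1+\delta}E_i$; here $\delta\neq-1$, since substituting $\z=-1$ in \eqref{QuadraticDelta} yields $-1\neq0$. Using this, the idempotency $E_i^2=E_i$ and relations \eqref{bt1}--\eqref{bt8} for the $R_i$'s and $E_i$'s, one checks by a direct (though tedious) computation that $T_1,\dots,T_{n-1},E_1,\dots,E_{n-1}$ satisfy \eqref{bt1}--\eqref{bt8} with each $R_i$ replaced by $T_i$; I expect this bookkeeping to be the only genuinely laborious step, and I will not carry it out here.

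The heart of the matter is the quadratic relation. Expanding and using $R_iE_i=E_iR_i$, $E_i^2=E_i$, one finds
$$
T_i^2=R_i^2+(2\delta+\delta^2)E_iR_i^2=R_i^2+\big((\delta+1)^2-1\big)E_iR_i^2 .
$$
By \eqref{bt9}, $E_iR_i^2=\u E_i+(\v-1)E_iR_i$, so, writing $w:=\u(\delta+1)^2$,
$$
T_i^2=1+(w-1)E_i+(\v-1)(\delta+1)^2\,E_iR_i .
$$
On the other hand $E_iT_i=(1+\delta)E_iR_i$, whence $1+(w-1)E_i+(w-1)E_iT_i=1+(w-1)E_i+(w-1)(1+\delta)\,E_iR_i$. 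The coefficients of $E_i$ in these two expressions agree, and those of $E_iR_i$ agree if and only if $(\v-1)(\delta+1)^2=(w-1)(\delta+1)$; after dividing by $\delta+1\neq0$ this reads $\u(\delta+1)^2-(\v-1)(\delta+1)-1=0$, which is precisely the condition that $\delta$ be a root of \eqref{QuadraticDelta}. Hence $T_i^2=1+(w-1)E_i+(w-1)E_iT_i$, the quadratic relation of $\En(w)$, and we obtain a homomorphism of $\LL$--algebras $f\colon\En(\u(\delta+1)^2)\to\En(\u,\v)\otimes_\KK\LL$ carrying the braid generators to the $T_i$ and the tie generators to the $E_i$.

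Finally I would check that $f$ is bijective. It is onto, because $R_i=(1-\tfrac{\delta}{1+\delta}E_i)T_i$ lies in the image. For injectivity I would produce the inverse explicitly: in $\En(\u(\delta+1)^2)$ set $S_i:=(1-\tfrac{\delta}{1+\delta}E_i)\tau_i$, where $\tau_i$ denotes the $i$--th braid generator; rerunning the computation above with $-\tfrac{\delta}{1+\delta}$ in place of $\delta$ and $w$ in place of both $\u$ and $\v$ shows, once more exactly because $\delta$ is a root of \eqref{QuadraticDelta}, that $S_1,\dots,S_{n-1},E_1,\dots,E_{n-1}$ satisfy \eqref{bt1}--\eqref{bt9}. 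Thus $R_i\mapsto S_i$, $E_i\mapsto E_i$ defines a homomorphism $g\colon\En(\u,\v)\otimes_\KK\LL\to\En(\u(\delta+1)^2)$, and since $\tau_i=(1+\delta E_i)S_i$ and $T_i=(1+\delta E_i)R_i$, both composites $f\circ g$ and $g\circ f$ fix every generator and hence equal the identity. (Alternatively, one notes that $f$ and the analogously defined $g$ are both surjective between the finite--dimensional algebras in question, which forces both to be isomorphisms.)
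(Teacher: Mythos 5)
Your proposal is correct and follows essentially the same route as the paper: define $T_i=R_i+\delta E_iR_i$, reduce the verification of the quadratic relation of $\En(\u(\delta+1)^2)$ to the coefficient identity $(\v-1)(\delta+1)=\u(\delta+1)^2-1$, which is exactly the condition that $\delta$ be a root of (\ref{QuadraticDelta}). The only difference is that you spell out the bijectivity (invertibility of $1+\delta E_i$ since $\delta\neq-1$, and the explicit inverse homomorphism), a point the paper leaves implicit; your verification there is sound.
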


\begin{proof}

The $T_i$'s satisfy  the relations (\ref{bt1})--(\ref{bt8}) and  we  have,  using relation (\ref{bt9}),
$$
T_i^2 = R_i^2+(\delta^2+2\delta)E_iR_i^2=1+(\u(\delta +1)^2-1)E_i+(\v-1)(\delta+1)^2 E_iR_i.
$$
Now,  since
$$
R_i = T_i -\frac{\delta}{\delta +1}E_iT_i,
$$
we have  $E_iR_i=  (\delta + 1)^{-1} E_iT_i$, and  substituting we get
\begin{equation}\label{quadra}
T_i^2
  =  1 + ( \u (\delta+1)^2-1)  E_i + (\v-1) (\delta  +1) E_iT_i.
\end{equation}
  Therefore,   the  coefficients of $E_i$ and $E_iT_i$   are equal since $\delta$ is a root of the polynomial (\ref{QuadraticDelta}).
\end{proof}

 \begin{remark}\rm
  Notice that the roots of (\ref{QuadraticDelta}) are:
$\z_{\pm}=(\v-1-2\u \pm \sqrt{(\v-1)^2+ 4 \u})/2\u$, so
\begin{equation}\label{quadraticT}
T_i^2 = 1 + \frac{(\v-1)\left(\v-1\pm\sqrt{(\v-1)^2+4\u}\right)}{2\u} ( E_i +  E_iT_i).
\end{equation}\rm
 Thus,  for  $\v=\u$, we  have: $ \z_+=0$  and $\z_-= - \u^{-1}(\u+1) $
with  the corresponding quadratic  relations:
$$   T_i^2= 1+ (\u-1) (E_i+E_iT_i), \quad   T_i^2= 1+ \frac{1-\u}{\u} (E_i+E_iT_i).$$
\end{remark}

  The  first solution gives  trivially $\E_n(\u)$, while  the  second one gives   another presentation of  $\E_n(\u)$,  obtained  by  keeping  as  parameter  $\u^{-1}$; note that $\E_n(\u) =\E_n(\u^{-1})$.

 On the other hand, for $\u=1$, we  get
$  \z_{\pm}=   ( \v-3 \pm \sqrt{\v^2-2 \v + 5  })/2  $
giving
\begin{equation}
T_i^2 = 1 + \frac{(\v-1)\left(\v-1\pm\sqrt{\v^2-2 \v + 5} \ \right)  }{2} ( E_i +  E_iT_i).
\end{equation}\rm

These  two solutions determine    isomorphisms between  $\E_n(\sqrt{\u})$ and  $\E_n(\u)$.

   At this point  we have to  note  that  there is  another interesting  specialization of  $\E_n(\u,\v)$,  namely  when $\v=1$.   In fact,  $\E_n(\u,1)$   deserves  a deeper investigation.  Here  we  gives some relations  holding only in  this specialization.
  More precisely, we have:
\begin{equation}\label{QuafraticInverse}
R_i^2 = 1 + (\u -1 )E_i\quad \text{and} \quad R_i^{-1} = R_i + (\u^{-1}-1)E_iR_i\quad \text{for all $i$.}
\end{equation}
Then we deduce
\begin{equation}\label{Cubic}
(\u +1) R_i -\u R_i^{-1} = R_i^3
\end{equation}
since $E_iR_i^{-1}=\u^{-1}E_iR_i$.
So,
\begin{equation}
R_i^4 -(\u +1 )R_i^2 + \u =0, \quad\text{or equivalently}\quad (R_i^2-1)( R_i^2 -\u )=0.
\end{equation}

\subsection{  Markov trace on  $\E_n(\u,\v)$}

\begin{proposition}\label{MarkovTrace}
Let $\a$ and $\b$ two mutually commuting variables.
There exists a unique Markov trace $\rho =\{\rho_n\}_{n\in\NN}$ on $\En (\u ,\v)$, where the $\rho_n$'s are linear maps from
 $\En (\u ,\v)$ to   $\LL(\a, \b)$, satisfying $\rho_n(1)=1$, and defined inductively by  the  rules:
\begin{enumerate}
\item $\rho_n (XY)=\rho_n(YX)$,
\item $\rho_{n+1}(XR_n)= \rho_{n+1}(XR_nE_n)=\a \rho_n(X) $,
\item $\rho_{n+1}(XE_n)= \b\rho_n(X)$,
\end{enumerate}
where  $X,Y\in \En(\u, \v)$.
\end{proposition}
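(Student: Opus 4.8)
The plan is to obtain $\rho$ by transporting, through the isomorphism of Proposition \ref{Isomorphism}, the Markov trace that is already known on the one--parameter bt--algebra (constructed in \cite{aijuMMJ1}); this is the route announced in the Introduction. Write $\mathsf{T}_i,E_i$ for the braid and tie generators of the one--parameter bt--algebra, and recall from \cite{aijuMMJ1} its unique Markov trace $\tau=\{\tau_n\}$; in particular $\tau_1(1)=1$, $\tau_n(XY)=\tau_n(YX)$, $\tau_{n+1}(X\mathsf{T}_n)=\tau_{n+1}(X\mathsf{T}_nE_n)=\mathsf{A}\,\tau_n(X)$ and $\tau_{n+1}(XE_n)=\mathsf{B}\,\tau_n(X)$ for $X$ at level $n$, where $\mathsf{A},\mathsf{B}$ are the (mutually commuting) trace parameters; I extend scalars to $\LL$. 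Fix a root $\delta$ of (\ref{QuadraticDelta}); since $\z=-1$ does not satisfy (\ref{QuadraticDelta}) one has $\delta+1\neq 0$, hence $w:=\u(\delta+1)^2\neq 0$, and Proposition \ref{Isomorphism} provides algebra isomorphisms $\Psi_n\colon\E_n(w)\to\E_n(\u,\v)\otimes_\KK\LL$, compatible with the inclusions $\E_n\hookrightarrow\E_{n+1}$ (both sides send $\mathsf{T}_i$ to $R_i+\delta E_iR_i$, $E_i$ to $E_i$ by the same formula), with $\Psi_n(\mathsf{T}_i)=R_i+\delta E_iR_i$ and $\Psi_n(E_i)=E_i$; applying $\Psi_n^{-1}$ to the identity $R_i=T_i-\tfrac{\delta}{\delta+1}E_iT_i$ used in the proof of Proposition \ref{Isomorphism} gives $\Psi_n^{-1}(R_i)=\mathsf{T}_i-\tfrac{\delta}{\delta+1}E_i\mathsf{T}_i$.

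\emph{Existence.} I put $\rho_n:=\tau_n\circ\Psi_n^{-1}$, restricted to $\E_n(\u,\v)$. Then $\rho_1(1)=1$, the family is compatible with inclusions, rule (1) holds because $\Psi_n^{-1}$ is an algebra map and $\tau$ is a trace, and rule (3) reads $\rho_{n+1}(XE_n)=\tau_{n+1}(\Psi_n^{-1}(X)E_n)=\mathsf{B}\,\rho_n(X)$, so $\b=\mathsf{B}$. Rule (2) is the one computation to carry out: substituting $\Psi_{n+1}^{-1}(R_n)=\mathsf{T}_n-\tfrac{\delta}{\delta+1}E_n\mathsf{T}_n$ and using only the relations $E_n\mathsf{T}_n=\mathsf{T}_nE_n$ and $E_n^2=E_n$, both $\rho_{n+1}(XR_n)$ and $\rho_{n+1}(XR_nE_n)$ collapse to $\tfrac{\mathsf{A}}{\delta+1}\,\rho_n(X)$; thus rule (2) holds with $\a:=\mathsf{A}/(\delta+1)$ and, crucially, the two prescribed values agree by themselves. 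Since $\delta\in\LL$ and $\delta+1\neq 0$, the pair $(\a,\b)$ is again a pair of algebraically independent commuting indeterminates over $\LL$ with $\LL(\a,\b)=\LL(\mathsf{A},\mathsf{B})$, so each $\rho_n$ takes values in $\LL(\a,\b)$, as required.

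\emph{Uniqueness.} Conversely, let $\rho'=\{\rho'_n\}$ be any family of $\LL(\a,\b)$--linear forms with $\rho'_1(1)=1$ obeying (1)--(3), and set $\sigma'_n:=\rho'_n\circ\Psi_n$. Running the same computation in reverse (again using only $E_n\mathsf{T}_n=\mathsf{T}_nE_n$ and $E_n^2=E_n$) shows that $\sigma'$ obeys the defining rules of the one--parameter Markov trace of \cite{aijuMMJ1} on $\E_n(w)$, with parameters $(\delta+1)\a$ and $\b$; by the uniqueness in \cite{aijuMMJ1}, $\sigma'=\tau$, whence $\rho'_n=\tau_n\circ\Psi_n^{-1}=\rho_n$. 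This establishes both existence and uniqueness.

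The point deserving attention in this route — the only one that is not purely formal — is the \emph{self--consistency} of rule (2): the statement imposes the two equalities $\rho_{n+1}(XR_n)=\a\rho_n(X)$ and $\rho_{n+1}(XR_nE_n)=\a\rho_n(X)$ with the \emph{same} $\a$, and the construction must not make them incompatible; the computation above shows the agreement is automatic, forced precisely by relations (\ref{bt4}) and (\ref{bt2}). I do not expect any genuine obstacle beyond this with the transfer route. Should one instead want a proof that does not invoke Proposition \ref{Isomorphism}, the real labour would move elsewhere: one would first need an inductive spanning set of $\E_n(\u,\v)$ as an $\E_{n-1}(\u,\v)$--module by words in $R_{n-1},E_{n-1}$ and generators of lower index — available from the basis of the bt--algebra in \cite{rhJAC}, since the second parameter does not change the underlying $\KK$--module — then derive uniqueness by reducing $\rho_n$ to $\rho_{n-1}$ along it, and finally prove that the map so defined is well defined by checking compatibility with all of (\ref{bt1})--(\ref{bt9}); that last verification is the genuine obstacle of the direct approach.
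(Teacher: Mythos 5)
Your proposal is correct and follows essentially the same route as the paper: the paper also defines $\rho$ by transporting the Markov trace of \cite{aijuMMJ1} through the isomorphism of Proposition \ref{Isomorphism}, with exactly your parameter identification $\a=(\delta+1)^{-1}\a'$ and $\b=\b'$. Your explicit check of rule (2) (including the agreement of the two prescribed values) and the reverse-transport argument for uniqueness simply spell out details the paper leaves implicit.
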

\begin{proof}
 The proof follows  from   Proposition \ref{Isomorphism},   since is obtained   by carrying the    Markov trace on the bt--algebra  \cite[Theorem 3]{aijuMMJ1} to $\En (\u ,\v)$.   More precisely, if we denote by $\rho'$ the Markov trace on the bt--algebra, then $\rho$ is defined by $\rho'\circ \phi$, where $\phi$ denote the isomorphism of Proposition  \ref{Isomorphism}; moreover denoting by $\a'$ and $\b'$ the parameters trace of $\rho'$, we have  $\a= (\delta + 1)^{-1}\a'$ and $\b =\b'$.
\end{proof}

\section{Invariants}
  In this  section  we define,  via  Jones recipe, the  invariants of classical and tied links associated to  the  algebra $\E_n(\u,\v)$.
\subsection{}  Define the  homomorphism  $\pi_{\c}$  from  $B_n$ to $\E_n(\u,\v)$    by taking
\begin{equation}\label{homo}
\pi_{\c} (\sigma_i) = \sqrt{\c} R_i,
\end{equation}
where  the scaling factor $\c$ is obtained by imposing, due to the second Markov move,  that  $(\rho \circ \pi_{\c})(\sigma_i)=
(\rho \circ \pi_{\c})(\sigma_i^{-1})$; thus
\begin{equation}\label{formulac}
\c:= \frac{\a +(1-\v)\u^{-1}\b + (\u^{-1}-1)\a}{\a} =  \frac{\a+\b(1-\v)}{\a\u}.
\end{equation}
\begin{theorem} The  function
$\Upsilon :\mathfrak{L} \longrightarrow   \CC(\u,\v,\a,\sqrt{\c})$,  defines an invariant for classical links,
$$
\Upsilon (L) :=\left(\frac{1}{\a\sqrt{\c}}\right)^{n-1}(\rho \circ \pi_{\c})(\sigma),
$$
where $L = \widehat{\sigma}$,  $\sigma\in B_n$.
\end{theorem}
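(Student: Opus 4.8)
The plan is to run the Jones recipe. By Alexander's theorem every $L\in\mathfrak{L}$ is a closure $\widehat{\sigma}$, and by Markov's theorem two braids have the same closure iff they are connected by a finite sequence of conjugations (inside a fixed $B_n$) and positive/negative (de)stabilisations $\sigma\leftrightarrow\sigma\sigma_n^{\pm1}$. Hence it suffices to show that the scalar $\left(\tfrac{1}{\a\sqrt{\c}}\right)^{n-1}(\rho\circ\pi_{\c})(\sigma)$, with $\rho$ the Markov trace of Proposition \ref{MarkovTrace} and $\pi_{\c}$ the map (\ref{homo}), is unaffected by these moves. First I would check that $\pi_{\c}\colon B_n\to\E_n(\u,\v)$ is a well-defined homomorphism into the group of units: the elements $\sqrt{\c}\,R_i$ satisfy the Artin relations because the $R_i$ satisfy (\ref{bt7})--(\ref{bt8}) and $\sqrt{\c}$ is a central scalar, and each $\sqrt{\c}\,R_i$ is invertible since every $R_i$ is, by (\ref{Tinverse}); here $\c$ is the nonzero rational function (\ref{formulac}), and $\a\neq0$ as $\a$ is an indeterminate.

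Invariance under conjugation is then immediate: for $\tau,\sigma\in B_n$, rule (1) of Proposition \ref{MarkovTrace} and the invertibility of $\pi_{\c}(\tau)$ give $(\rho\circ\pi_{\c})(\tau\sigma\tau^{-1})=\rho_n\!\left(\pi_{\c}(\tau)\,\pi_{\c}(\sigma)\,\pi_{\c}(\tau)^{-1}\right)=\rho_n(\pi_{\c}(\sigma))$, and the normalising factor depends on $n$ only. For the positive stabilisation, take $\sigma\in B_n\subset B_{n+1}$ and set $X=\pi_{\c}(\sigma)\in\E_n(\u,\v)$; rule (2) yields $(\rho\circ\pi_{\c})(\sigma\sigma_n)=\sqrt{\c}\,\rho_{n+1}(XR_n)=\a\sqrt{\c}\,\rho_n(X)$, so the extra factor $\a\sqrt{\c}$ is exactly absorbed when the exponent $n-1$ is replaced by $n$.

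The one computation that is not purely formal is the negative stabilisation. I would expand $R_n^{-1}$ by (\ref{Tinverse}),
$$\rho_{n+1}(XR_n^{-1})=\rho_{n+1}(XR_n)+(1-\v)\u^{-1}\rho_{n+1}(XE_n)+(\u^{-1}-1)\rho_{n+1}(XE_nR_n),$$
rewrite $E_nR_n=R_nE_n$ using (\ref{bt4}), and apply rules (2) and (3) to turn the three summands into $\a\rho_n(X)$, $\b\rho_n(X)$ and $\a\rho_n(X)$ respectively. The resulting coefficient $\a+(1-\v)\u^{-1}\b+(\u^{-1}-1)\a$ equals $\a\c$ by the very definition (\ref{formulac}) of the scaling factor, so $(\rho\circ\pi_{\c})(\sigma\sigma_n^{-1})=\tfrac{1}{\sqrt{\c}}\,\a\c\,\rho_n(X)=\a\sqrt{\c}\,\rho_n(X)$, the same factor as in the positive case, which is again cancelled by the change of exponent. (The $n=1$ case of this is precisely the identity $(\rho\circ\pi_{\c})(\sigma_i)=(\rho\circ\pi_{\c})(\sigma_i^{-1})$ that was imposed in (\ref{homo}) to fix $\c$.)

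Finally, since $\rho_n(1)=1$ and the rules (1)--(3) only ever produce elements of $\CC(\u,\v,\a,\b)=\CC(\u,\v,\a,\sqrt{\c})$, the last equality following from (\ref{formulac}), and since multiplication by $\sqrt{\c}\,^{-(n-1)}$ stays inside this field, the value $\Upsilon(\widehat{\sigma})$ lies in $\CC(\u,\v,\a,\sqrt{\c})$; the three verifications above show it is independent of the chosen braid representative, so $\Upsilon$ is a well-defined invariant on $\mathfrak{L}$. I do not expect any genuine obstacle: everything reduces to Proposition \ref{MarkovTrace}, Markov's theorem, and the bookkeeping with (\ref{Tinverse}), (\ref{bt4}) and (\ref{formulac}) in the negative-stabilisation step.
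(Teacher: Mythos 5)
Your proposal is correct and takes essentially the same route as the paper: the paper's proof merely defers the Markov-move verification to the proof of $\bar\Delta$ in \cite{aijuMMJ1}, observing that only the scaling factor and the formula for $R_i^{-1}$ change, and your explicit checks of conjugation and positive/negative stabilization—in particular the negative-stabilization bookkeeping via (\ref{Tinverse}), (\ref{bt4}) and (\ref{formulac}), which is precisely where the two parameters enter—carry out that routine verification in full. One cosmetic quibble: $\CC(\u,\v,\a,\b)$ is contained in, not equal to, $\CC(\u,\v,\a,\sqrt{\c})$, but the containment is all your argument needs.
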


\begin{proof}
The  proof  follows step by step  the proof  done for the invariant  $\bar \Delta$ in \cite{aijuMMJ1},  replacing  the elements $T_i$ by  $R_i$.  Observe  that the only  differences consist  in  the expressions of  $\textsf{L}$  (see (46)\cite{aijuMMJ1}), that must be replaced by   $\c$,  and  of the  inverse element, that contains   now two parameters. However, it is  a routine to check that the proof is not  affected by  the  presence  of two  parameters instead of  one.
\end{proof}

\begin{remark}\rm
From Remark \ref{Specializations} it follows that, respectively,  the invariants $\Delta$ and $\Theta$ correspond  to the specializations  $\u=\v$ and $\u=1$ with $\v= \sqrt{\u} -\sqrt{\u}^{\,-1} $ of $\Upsilon$.
\end{remark}

\subsection{}

  The invariant $\Upsilon$ can be extended to an invariant of tied links, denoted by $\widetilde{\Upsilon}$, simply extending $ \pi_{\c}$ to $TB_n$ by mapping  $\eta_i$ to $E_i$. We denote this extension by $\widetilde{\pi}_{\c}$.

\begin{theorem}\label{T2}  The   function    $
\widetilde{\Upsilon} :\widetilde{\mathfrak{L}} \longrightarrow   \CC(\u,\v,\a,\sqrt{\c})$,  defines an invariant for tied links, where
$$
\widetilde{\Upsilon }(L) :=\left(\frac{1}{\a\sqrt{\c}}\right)^{n-1}(\rho \circ \widetilde{ \pi}_{\c})(\eta),
$$
$L$
 being the  closure of the  $n$--tied braid $\eta$.
\end{theorem}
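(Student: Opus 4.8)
The plan is to mimic the classical case (Theorem on $\Upsilon$) but now using the Markov-type description of tied links in terms of the tied braid monoid $TB_\infty$. By \cite[Theorem 3.7]{aijuJKTR1}, $\widetilde{\mathfrak L}$ is in bijection with the quotient of $\bigsqcup_n TB_n$ by the t--Markov moves; concretely, two tied braids $\eta\in TB_n$ and $\eta'\in TB_m$ close to the same tied link iff they are connected by a finite sequence of (i) conjugation $\eta\sim \alpha\eta\alpha^{-1}$ within a fixed $TB_n$, and (ii) the positive and negative stabilization moves $\eta\leftrightarrow \eta\sigma_n^{\pm 1}$ and $\eta\leftrightarrow \eta\eta_n$ passing between $TB_n$ and $TB_{n+1}$. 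So it suffices to check that the quantity $\left(\tfrac{1}{\a\sqrt{\c}}\right)^{n-1}(\rho\circ\widetilde\pi_{\c})(\eta)$ is unchanged under each of these moves. Since $\widetilde\pi_{\c}$ is a well-defined monoid homomorphism $TB_n\to\E_n(\u,\v)$ (it respects relations \eqref{eta1}--\eqref{eta9} because the $E_i$'s and $R_i$'s satisfy \eqref{bt1}--\eqref{bt9}, and $\widetilde\pi_{\c}(\sigma_i)=\sqrt{\c}R_i$, $\widetilde\pi_{\c}(\eta_i)=E_i$), this reduces to trace identities in $\E_n(\u,\v)$.

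First I would verify invariance under conjugation: this is immediate from property (1) of the Markov trace $\rho$ in Proposition \ref{MarkovTrace}, i.e. $\rho_n(XY)=\rho_n(YX)$, together with the fact that the normalizing factor depends only on $n$. Next, the braid stabilization $\eta\mapsto\eta\sigma_n$: here $\widetilde\pi_{\c}(\eta\sigma_n)=\sqrt{\c}\,\widetilde\pi_{\c}(\eta)R_n$ with $\widetilde\pi_{\c}(\eta)\in\E_n(\u,\v)\subset\E_{n+1}(\u,\v)$, so by rule (2), $\rho_{n+1}(\widetilde\pi_{\c}(\eta)R_n)=\a\,\rho_n(\widetilde\pi_{\c}(\eta))$; the extra $\sqrt{\c}$ and the change of normalizing exponent from $n-1$ to $n$ combine to give exactly the factor $\a\sqrt{\c}$ needed, so the value is preserved. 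For the negative stabilization $\eta\mapsto\eta\sigma_n^{-1}$ one uses \eqref{Tinverse} to expand $R_n^{-1}=R_n+(1-\v)\u^{-1}E_n+(\u^{-1}-1)E_nR_n$, applies rules (2) and (3) term by term, and checks that the scaling factor $\c$ defined in \eqref{formulac} was chosen precisely so that $(\rho\circ\widetilde\pi_{\c})(\sigma_n)$ and $(\rho\circ\widetilde\pi_{\c})(\sigma_n^{-1})$ scale the same way; this is the same computation already invoked in defining $\c$. Finally, the tie stabilization $\eta\mapsto\eta\eta_n$: we have $\widetilde\pi_{\c}(\eta\eta_n)=\widetilde\pi_{\c}(\eta)E_n$, and rule (3) gives $\rho_{n+1}(\widetilde\pi_{\c}(\eta)E_n)=\b\,\rho_n(\widetilde\pi_{\c}(\eta))$, so we must confirm that the normalization absorbs this correctly — and indeed the t--Markov move adding a tie is designed (cf. \cite{aijuJKTR1}) to be compatible with the $\b$-weight, so the only subtlety is bookkeeping of the power of $\a\sqrt{\c}$.

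The main obstacle, as in the one-parameter case, is not any single identity but the careful verification that the stabilization moves land inside the inductive hypotheses of the Markov trace — i.e.\ that when we write $\widetilde\pi_{\c}(\eta)$ for $\eta\in TB_n$ we really may regard it as an element of $\E_n(\u,\v)$ sitting inside $\E_{n+1}(\u,\v)$ in the way rules (2)--(3) require, so that $X$ in those rules is genuinely of the right form (a product not involving $R_n$ or $E_n$). This is exactly the point where the proof of $\bar\Delta$ in \cite{aijuMMJ1} does its work, and the argument there transfers verbatim, with $T_i$ replaced by $R_i$ and $\textsf L$ replaced by $\c$; the presence of the second parameter $\v$ enters only through the explicit form of $R_i^{-1}$ in \eqref{Tinverse} and the formula \eqref{formulac} for $\c$, and does not alter the structure of the argument. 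Hence $\widetilde\Upsilon$ is a well-defined invariant of $\widetilde{\mathfrak L}$, and restricting along the inclusion $B_n\hookrightarrow TB_n$ recovers $\Upsilon$ of the previous theorem.
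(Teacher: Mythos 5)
Your overall strategy (direct verification that the normalized trace is constant on t--Markov equivalence classes) is legitimate and is in fact a different route from the paper, which proves Theorem \ref{T2} jointly with Theorem \ref{T3} by first building the invariant through the skein rules I--III and then matching it with the Jones recipe as in \cite[Theorems 2.1 and 4.5]{aijuJKTR1}. However, your argument has a genuine error in the list of moves: the ``tie stabilization'' $\eta\leftrightarrow\eta\,\eta_n$ is \emph{not} a t--Markov move. For $\eta\in TB_n$, the closure of $\eta\,\eta_n$ is not the closure of $\eta$: it is the closure of $\eta$ together with an extra split unknotted component tied to the component passing through the $n$--th strand (this is exactly the tied link $\widetilde{L}\ \widetilde{\sqcup}\ \bigcirc$ of rule III of Lemma \ref{Omega}, which the invariant is supposed to, and does, distinguish from $\widetilde{L}$). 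Accordingly, the invariance you assert at that step is false: by rule (3) of Proposition \ref{MarkovTrace} the normalized quantity gets multiplied by $\b/(\a\sqrt{\c})$, and since $\b=\a(\u\c-1)/(1-\v)$ this factor is not $1$ in general; no ``bookkeeping of the power of $\a\sqrt{\c}$'' can absorb it, so that part of your proof cannot be repaired as stated --- it must simply be removed, because the move is not there.

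What \emph{is} there, and what your plan omits, are the tied stabilizations $\eta\leftrightarrow\eta\,\eta_n\sigma_n^{\pm1}$ (a stabilizing crossing carrying a tie, which after closure is a tie inside a single component and hence irrelevant); this is precisely why rule (2) of Proposition \ref{MarkovTrace} stipulates $\rho_{n+1}(XR_nE_n)=\rho_{n+1}(XR_n)=\a\,\rho_n(X)$. For the negative versions one expands $R_n^{-1}$ via \eqref{Tinverse}, getting $E_nR_n^{-1}=\u^{-1}E_nR_n+(1-\v)\u^{-1}E_n$, so both $\rho_{n+1}(XR_n^{-1})$ and $\rho_{n+1}(XE_nR_n^{-1})$ equal $\c\,\a\,\rho_n(X)$ by \eqref{formulac}, matching the positive case after the $\sqrt{\c}$ rescaling of \eqref{homo}. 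With the move list corrected in this way (conjugation in the monoid sense $\alpha\beta\sim\beta\alpha$, plus stabilizations $\eta\sigma_n^{\pm1}$ and $\eta\,\eta_n\sigma_n^{\pm1}$), the rest of your verification goes through and gives a self-contained alternative to the paper's skein-theoretic argument; but as written the proposal proves invariance under a move that does not preserve the tied link and under which the quantity is genuinely not invariant.
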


 This theorem  will be proved together with   Theorem \ref{T3} of the next section.

\section{The invariant $\widetilde{\Upsilon}$ via skein relation}\label{TiedUpsilon}

This section  is two parts: the first one describe  $\widetilde{\Upsilon}$   by skein relation  and the second is devoted  to  analyze   a   specialization   $\widetilde\Omega$   of $\widetilde{\Upsilon}$.

In  the  sequel,   if  there  is  no risk  of  confusion,    we  indicate by $L$  both the  oriented   tied  link  and   its  diagram
  and we denote by  $L_+, L_-, L_\sim, L_{+,\sim}$    and $L_{-,\sim}$  the  diagrams  of  tied  links,  that  are  identical    outside   a  small  disc into  which enter two  strands, whereas  inside the disc the  two strands look  as  shown  in  Fig. 1.
\begin{figure}[H]
\centering
\includegraphics[scale=0.9]{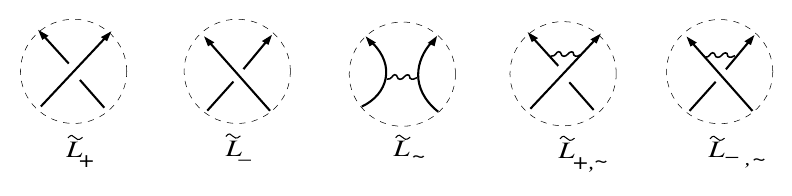}
\caption{}\label{ties3}
\end{figure}
The following  theorem  is the counterpart   of    \cite[Theorem 2.1]{aijuJKTR1}.

\begin{theorem} \label{T3}  The function $\widetilde{\Upsilon}$ is defined uniquely  by  the   following  three  rules: \begin{itemize}
\item[I] The  value of  $\widetilde{\Upsilon}$ is  equal to $1$  on  the  unknot.

\item[II] Let  $L$   be  a tied  link.  By  $  L \sqcup \bigcirc  $  we  denote  the   tied   link  consisting of $L$  and  the unknot,  unlinked to $L$.
Then
$$
\widetilde{\Upsilon}(L \sqcup  \bigcirc\,  )=  \frac {1}{\a \sqrt{\c} } \widetilde{\Upsilon}(L).
$$
\item[III] Skein  rule:
$$
 \frac{1}{ \sqrt{\c}}\widetilde{\Upsilon}( L_ +)-\sqrt{\c} \widetilde{\Upsilon} (L_-) =  \frac{\v-1}{\u} \widetilde{\Upsilon} (L_\sim) + \frac{1}{\sqrt{\c}} (1-\u^{-1} ) \widetilde{\Upsilon}(L_{+,\sim}).
$$

\end{itemize}
\end{theorem}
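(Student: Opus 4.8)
The plan is to prove the two assertions in tandem: first that the three rules I--III \emph{determine} $\widetilde{\Upsilon}$ uniquely (once we know it is defined on links via braids at all), and second that the function $\widetilde{\Upsilon}$ of Theorem \ref{T2}, coming from the Markov trace $\rho$ through $\widetilde{\pi}_{\c}$, actually \emph{satisfies} these rules. The latter simultaneously proves Theorem \ref{T2}, because once we exhibit $\widetilde{\Upsilon}$ as a well-defined function on tied-link diagrams obeying the skein recursion, invariance under the t--Markov moves will follow from the construction (Jones recipe applied to the algebra $\E_n(\u,\v)$ with the Markov trace $\rho$), exactly as in \cite{aijuJKTR1} and \cite{aijuMMJ1}. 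So I would organise the proof as: (a) verify rules I, II, III for the trace-defined $\widetilde{\Upsilon}$; (b) prove invariance of $\widetilde{\Upsilon}$ under Reidemeister/t--Markov moves, citing the analogous arguments for $\bar\Delta$ and $\widetilde\Delta$ and noting the only changes are the scaling factor $\c$ of \eqref{formulac} and the two-parameter inverse \eqref{Tinverse}; (c) show that I--III force uniqueness, by the usual descending induction on the number of crossings that must be switched to unknot the diagram.

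For step (a) the content is essentially a computation inside $\E_n(\u,\v)$ pushed through $\rho$. Rule I is immediate from $\rho_n(1)=1$ and the normalisation $(\a\sqrt{\c})^{-(n-1)}$. Rule II is the second Markov rule $\rho_{n+1}(X)=\rho_n(X)$ combined with the normalising power: adding an unlinked unknot adds one strand, hence one extra factor $(\a\sqrt{\c})^{-1}$, which is exactly the claimed factor. Rule III is the heart: starting from the defining quadratic relation \eqref{bt9}, $R_i^2 = 1 + (\u-1)E_i + (\v-1)E_iR_i$, multiply through by $R_i^{-1}$ to get $R_i - R_i^{-1} = (\u-1)E_iR_i^{-1} + (\v-1)E_i$; then use $E_iR_i^{-1}=\u^{-1}E_iR_i$ (valid since from \eqref{Tinverse}, $E_iR_i^{-1}=E_iR_i + (1-\v)\u^{-1}E_i + (\u^{-1}-1)E_iR_i = \u^{-1}E_iR_i + (1-\v)\u^{-1}E_i$ — I would double-check this small identity, it may carry an extra $E_i$ term that must be tracked carefully) to rewrite everything in the basis $\{R_i, E_i, E_iR_i\}$. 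Applying $\widetilde\pi_{\c}$ (so $\sigma_i\mapsto\sqrt{\c}R_i$, $\sigma_i^{-1}\mapsto (\sqrt{\c})^{-1}R_i^{-1}$, $\eta_i\mapsto E_i$, $\eta_i\sigma_i\mapsto\sqrt{\c}E_iR_i$), then $\rho$, and multiplying by the common normalising constant, converts the algebra identity into precisely the stated skein relation, with $\widetilde{L}_+\leftrightarrow\sigma_i$, $\widetilde{L}_-\leftrightarrow\sigma_i^{-1}$, $\widetilde{L}_\sim\leftrightarrow\eta_i$, $\widetilde{L}_{+,\sim}\leftrightarrow\eta_i\sigma_i$; matching the coefficient of $\widetilde{L}_\sim$ gives $(\v-1)/\u$ and that of $\widetilde{L}_{+,\sim}$ gives $(1-\u^{-1})/\sqrt{\c}$.

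For step (c), the uniqueness argument is the standard one: given any tied-link diagram $\widetilde{L}$ presented as the closure of a tied braid, I would induct on the pair (number of components, number of crossings needed to reduce to a disjoint union of unknots), using rule III to express $\widetilde{\Upsilon}(\widetilde{L}_+)$ in terms of $\widetilde{\Upsilon}$ of a diagram with one fewer "hard" crossing ($\widetilde{L}_-$, whose complexity is lower under a suitable well-ordering once we note the $\sqrt{\c}$ factors are invertible) and two diagrams $\widetilde{L}_\sim,\widetilde{L}_{+,\sim}$ in which that crossing is smoothed and a tie is introduced — these have strictly fewer crossings — and rule II to strip off split unknots; rule I anchors the base case. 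The delicate point here, and the one I expect to be the real obstacle, is the same one handled for $\widetilde\Delta$ in \cite{aijuJKTR1}: one must check that the recursion is \emph{consistent}, i.e. that different sequences of crossing changes and smoothings lead to the same value — equivalently, that I--III are compatible with the t--Markov moves and with the choice of braid word representing $\widetilde{L}$. Rather than redo this, I would invoke the well-definedness established in step (b) (the trace-theoretic construction does not depend on any choices), so that uniqueness only needs to show I--III have \emph{at most} one solution, which the induction delivers. The main subtlety to flag explicitly is the bookkeeping with the tie generators in the smoothing terms — in particular that $\widetilde{L}_{+,\sim}$ and $\widetilde{L}_{-,\sim}$ differ only by the crossing inside the disc while both carry the tie, and that the algebra relations \eqref{bt4}--\eqref{bt6} are exactly what make the two ways of writing $\eta_i\sigma_i$-type terms agree under $\rho$.
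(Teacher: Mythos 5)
Your route is genuinely different in structure from the paper's. The paper first establishes, by following the skein--theoretic proof of \cite[Theorem 2.1]{aijuJKTR1} (with $z,w$ replaced by $\a,\c$ and $\b$ given by (\ref{b})), that rules I--III by themselves define a unique invariant of tied links, and only afterwards identifies that invariant with the Jones--recipe function, following \cite[Theorem 4.5]{aijuJKTR1}; it is this identification that proves Theorem \ref{T2}. You invert the order: construct the invariant via the trace, verify I--III inside the algebra, and then use the already--established existence so that only the ``at most one solution'' half of uniqueness remains, avoiding the diagrammatic consistency check. That is a legitimate, arguably cleaner, logical organization, and your algebraic step is correct: from $R_i-R_i^{-1}=(\u-1)E_iR_i^{-1}+(\v-1)E_i$ and $E_iR_i^{-1}=\u^{-1}E_iR_i+(1-\v)\u^{-1}E_i$ (you were right to flag the extra $E_i$ term; the identity $E_iR_i^{-1}=\u^{-1}E_iR_i$ holds only at $\v=1$) one gets $R_i-R_i^{-1}=\frac{\v-1}{\u}E_i+(1-\u^{-1})E_iR_i$, which after applying $\widetilde{\pi}_{\c}$, $\rho$ and the normalization yields exactly rule III with the stated coefficients.

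The genuine gap is in your uniqueness induction. You claim that $\widetilde{L}_\sim$ and $\widetilde{L}_{+,\sim}$ ``have strictly fewer crossings'': this is false for $\widetilde{L}_{+,\sim}$, which retains the crossing and only acquires a tie, so your complexity measure does not decrease and the recursion as written need not terminate. The repair (which is the actual content of the proof the paper defers to) is a lexicographic measure: in the terms $\widetilde{L}_\sim$ and $\widetilde{L}_{+,\sim}$ the number of classes of components strictly drops, and once the two strands at a crossing lie in the same class, rule III specializes to the Homflypt--type rule IV of Remark \ref{othersSkein}, which allows crossing switches and smoothings among tied strands; one then inducts on the pair (number of classes, number of crossing changes needed to trivialize). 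A second missing point is the base case: rules I and II alone do not determine the value on unlinked circles joined by ties; that value must be extracted from rule IV applied to a kinked unknot, which is possible precisely because $\v-1$ is invertible in the coefficient field (and fails at $\v=1$, which is why Lemma \ref{Omega} requires its additional rule III). With these two points supplied, your plan goes through and proves the theorem.
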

\begin{proof}  (of Theorems \ref{T2} and \ref{T3})
See the proof    done for the invariant  $\mathcal{F}$ in \cite[Theorem 2.1]{aijuJKTR1},   replacing  the variables $z$  and $w$ respectively  by $\a$ and  $\c$. The  definition of  $t$ must be replaced by  that of  $\b$ given by
 \begin{equation}\label{b} \b=\a (\u\c-1)/(1-\v), \end{equation}
 according to  (\ref{formulac}).
 All steps of the proofs are still holding for the new skein rules  involving the  new  parameter $\v$.

 At this point  we  have an  invariant for  tied links $\widetilde \Upsilon$,  uniquely defined  by  the  rules I--III. It  remains  to  prove  that  it  coincides with  that  obtained  via Jones  recipe:  the proof now proceeds exactly  as  that of \cite[Theorem 4.5]{aijuJKTR1}.  In this  way  we have proven  also Theorem \ref{T2}.

\end{proof}

\begin{remark}\rm  Rules I and  II  imply  that  the  value of  the  invariant  on  a collection of $n$ unlinked  circles  is  $ (\a\sqrt{\c})^{1-n}$.
\end{remark}

\begin{remark}\label{othersSkein}
\rm The following    skein rule IV   is obtained  from  rule III,  adding a  tie between  the  two  strands  inside  the  disc.  Rules Va and Vb are    equivalent  to  the  skein  rule  III, by using  rule IV.
 \begin{enumerate}
 \item[IV]
 \[ \frac{1}{\u \sqrt{\c}}\widetilde{\Upsilon}( L_ {+,\sim})- \sqrt{\c} \widetilde{\Upsilon} (L_{-,\sim}) =   \frac{\v-1}{\u} \widetilde{\Upsilon} (L_\sim). \]

\item[Va]
 \[ \frac{1}{ \sqrt{\c} }\widetilde{\Upsilon}( L_ {+})= \sqrt{\c} \left[\widetilde{\Upsilon}(L_-) +\left( \u-1 \right) \widetilde{\Upsilon}(L_{-,\sim})\right] + \left(\v-1\right) \widetilde{\Upsilon} (L_\sim). \]

\item[Vb]
\[   \sqrt{\c} \widetilde{\Upsilon}( L_ {-})= \frac{1}{ \sqrt{\c}} \left[\widetilde{\Upsilon}(L_+) +\frac{1-\u}{\u} \widetilde{\Upsilon}(L_{+,\sim})\right] + \frac{1-\v}{\u} \widetilde{\Upsilon} (L_\sim).\]
\end{enumerate}

\end{remark}

\begin{remark}\rm
 The  value of  the  invariant $\widetilde \Upsilon(\u,\v)$  on a tied link made by  $n$   unlinked  circles   all  tied, is obtained  by   rule  IV  (cf.  \cite[Remark 2.3]{aijuJKTR1}),   and  it is
\begin{equation}\label{ccc} \left( \frac {\u\c-1}{\sqrt{\c}(1-\v)}\right)^{n-1}= \left( \frac{ \b} { \a\sqrt{\c} } \right)^{n-1}  .\end{equation}
The last  equality comes from  (\ref{b}).
\end{remark}

 \begin{remark} \rm For  tied   links  in   $\widetilde{ \mathfrak{L}}^*$,  the invariant  $\widetilde \Upsilon$ is  uniquely defined  by  rules I and  IV. Observe that,  by  multiplying  skein rule IV by  $\sqrt{\u}$,  we  get  that $\widetilde \Upsilon$ coincides  with  the  Homflypt polynomial in the  variables $  t= \sqrt{\u \c}$   and
 $ x= (\v-1)/\sqrt{\c}$;  that is, if
 $L$ is the tied link  in $\widetilde{ \mathfrak{L}}^*$, associated to the classical links $L$, then
$
\widetilde \Upsilon(L) =\P( L).
$

 \end{remark}

 \begin{remark} \rm The  invariants of tied links  $\widetilde \Delta$  and  $\widetilde \Theta$  are, respectively,   the  specializations  $\widetilde \Upsilon(\u,\u)$  and  $\widetilde \Upsilon(1,\v)$.
 \end{remark}

\subsection{}

 We shall  denote by $\widetilde \Omega$ the specialization $\widetilde\Upsilon_{\u, \v
 }$ at  $\v=1$.

  We observe  firstly  that  if  $\v=1$  then  $\c=\u^{-1}$, so the  invariant $\widetilde\Omega
  $ takes in fact  values in  $\CC(\sqrt{\u},\a,\b)$.
 The next lemma describes $\widetilde \Omega$  by skein relations and is the key  to show its  main properties.

 \begin{lemma}\label{Omega} The  invariant  $\widetilde\Omega$  is uniquely  defined  by the following rules:
 \begin{itemize}\item[I]  $ \widetilde\Omega(\bigcirc)=1.$
 \item[II]
 $\widetilde\Omega(L  \sqcup  \bigcirc \,)=  \a^{-1}\sqrt{\u} \,  \widetilde\Omega(L).$
\item[III]  By  $  L \ \widetilde{\sqcup} \ \bigcirc \ $  we  denote  the   tied   link  consisting of the tied link  $L$  and  the unknot,  unlinked to $L$, but tied to one component of $L$.
Then
$$
\widetilde\Omega(L \,\widetilde{\sqcup} \ \bigcirc \,)=  \frac { \b \sqrt{\u}}{\a   } \,\widetilde\Omega(L).
$$
\item[IV] Skein  rule:
$$
 \sqrt{\u}\,\widetilde\Omega( L_ +)-\frac{1}{\sqrt{\u}}\,\widetilde\Omega(L_-) +   \sqrt{\u}  ( \u^{-1}-1 )\, \widetilde\Omega(L_{+,\sim})=0.
$$
  \end{itemize}
  \end{lemma}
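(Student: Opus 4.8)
The plan is to obtain Lemma \ref{Omega} as the specialization of Theorem \ref{T3} at $\v=1$, checking that each rule degenerates correctly and that the scaling constants update consistently. First I would recall that, by the notation introduced just before the subsection, $\widetilde{\Omega}=\widetilde{\Upsilon}_{\u,1}$, so it suffices to set $\v=1$ in rules I--III of Theorem \ref{T3} and in the auxiliary constants. Rule I is independent of $\v$ and gives rule I of the lemma immediately. For the scaling factor, specializing \eqref{formulac} at $\v=1$ yields $\c=\a^{-1}(\a+0)\u^{-1}=\u^{-1}$, hence $\sqrt{\c}=\u^{-1/2}$ and $1/\sqrt{\c}=\sqrt{\u}$; substituting this into rule II of Theorem \ref{T3} turns the factor $\tfrac{1}{\a\sqrt{\c}}$ into $\a^{-1}\sqrt{\u}$, which is rule II of the lemma.

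Next I would handle rule III of the lemma, the statement about adjoining a circle tied to one component. This is the specialization at $\v=1$ of the value \eqref{ccc} computed in the remark after Theorem \ref{T3}: adding a circle tied to one component of $\widetilde{L}$ multiplies the invariant by the ``one tied circle'' factor, which is $\b^{0}\cdots$ — more precisely, by the remark, a collection of $m$ tied circles has invariant $\b^{m-1}(\a\sqrt{\c})^{1-m}$, so adjoining one tied circle multiplies by $\b(\a\sqrt{\c})^{-1}$. At $\v=1$ this equals $\b\a^{-1}\sqrt{\u}$, matching rule III; here one must be a little careful, since formula \eqref{b} for $\b$ in terms of $\a,\c$ becomes singular at $\v=1$, so I would instead keep $\b$ as an independent trace parameter (as in Proposition \ref{MarkovTrace}) and read off the factor $\b(\a\sqrt{\c})^{-1}$ directly, avoiding the division by $1-\v$. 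The key observation is that ``tied to one component'' is, in braid terms, closing up with an extra strand carrying a tie, i.e. applying $\widetilde{\pi}_\c$ to $\eta_n$ and using rule (3) of the Markov trace, so the factor is genuinely $\b/(\a\sqrt{\c})$ with no $\v$ in sight.

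For rule IV of the lemma I would specialize the skein rule III of Theorem \ref{T3} at $\v=1$: the term $\tfrac{\v-1}{\u}\widetilde{\Upsilon}(\widetilde{L}_\sim)$ vanishes, leaving
$$
\tfrac{1}{\sqrt{\c}}\widetilde{\Upsilon}(\widetilde{L}_+)-\sqrt{\c}\,\widetilde{\Upsilon}(\widetilde{L}_-)=\tfrac{1}{\sqrt{\c}}(1-\u^{-1})\widetilde{\Upsilon}(\widetilde{L}_{+,\sim}),
$$
and substituting $1/\sqrt{\c}=\sqrt{\u}$, $\sqrt{\c}=1/\sqrt{\u}$ gives exactly
$$
\sqrt{\u}\,\widetilde{\Omega}(\widetilde{L}_+)-\tfrac{1}{\sqrt{\u}}\,\widetilde{\Omega}(\widetilde{L}_-)+\sqrt{\u}(\u^{-1}-1)\,\widetilde{\Omega}(\widetilde{L}_{+,\sim})=0.
$$
Finally I would argue uniqueness: since rules I--III of Theorem \ref{T3} determine $\widetilde{\Upsilon}$ uniquely, and setting $\v=1$ is a well-defined ring specialization of $\CC(\u,\v,\a,\sqrt{\c})$ into $\CC(\u,\a)$ (after the substitution $\c\mapsto\u^{-1}$), the specialized rules determine $\widetilde{\Omega}$ uniquely; alternatively one notes that rules I--IV of the lemma are obtained from I--IV and Va--Vb of Remark \ref{othersSkein} at $\v=1$, and those determine the invariant. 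The main obstacle I anticipate is the bookkeeping around rule III and the constant $\b$: formula \eqref{b} degenerates at $\v=1$, so one must re-derive the ``tied circle'' multiplier from the Markov trace directly rather than by blindly specializing \eqref{b}; once this is dispatched, everything else is a routine substitution $\c=\u^{-1}$.
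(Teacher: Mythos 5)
Your verification that $\widetilde\Omega$ \emph{satisfies} the four rules is correct and essentially the paper's: at $\v=1$ one has $\c=\u^{-1}$, so rules I, II, IV are the specializations of rules I--III of Theorem \ref{T3}, and your derivation of rule III from the Markov trace (rule (3) of Proposition \ref{MarkovTrace}, keeping $\b$ as an independent parameter rather than using the degenerate formula (\ref{b})) is a sound way to check it. The genuine gap is in the uniqueness part. Your argument ``rules I--III of Theorem \ref{T3} determine $\widetilde\Upsilon$ uniquely, specialization at $\v=1$ is well defined, hence the specialized rules determine $\widetilde\Omega$ uniquely'' is a non sequitur: uniqueness of the solution of the generic system does not pass to the specialized system, and here it demonstrably fails. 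At $\v=1$ the skein rule loses the term in $\widetilde{L}_\sim$, so rules I, II, IV alone can never produce the value of $\widetilde\Omega$ on configurations in which classes are tied but unlinked (already two disjoint unknots joined by a tie); in the generic case that value comes from rule IV of Remark \ref{othersSkein}, i.e.\ formula (\ref{ccc}), which involves division by $1-\v$ and is unavailable at $\v=1$. For the same reason your alternative route fails: at $\v=1$ rules IV, Va, Vb of Remark \ref{othersSkein} also contain no $\widetilde{L}_\sim$ term, so they do not ``determine the invariant.''

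This is precisely the content of the paper's proof: rule III of the lemma is \emph{not} the specialization of any rule of Theorem \ref{T3}; it is an extra rule added exactly because the degenerate skein rule cannot create or evaluate ties between unlinked components, and this is ``the unique point that makes the case $\v=1$ to be considered separately.'' To close the gap you must argue uniqueness directly: show that rule IV (together with its consequence $\widetilde\Omega(\widetilde{L}_{+,\sim})=\widetilde\Omega(\widetilde{L}_{-,\sim})$) lets you change crossings and unlink all components of any tied link, reducing it to a disjoint union of unknots carrying some ties, and that the value of any such configuration is then fixed by rules I, II and III; this is the reduction carried out explicitly in the proof of Theorem \ref{T4}. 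Without an argument of this kind, the claim that rules I--IV determine $\widetilde\Omega$ uniquely is unsupported. (A minor related slip: the specialized invariant takes values in $\CC(\sqrt{\u},\a,\b)$, not $\CC(\u,\a)$, since $\b$ survives as an independent variable at $\v=1$.)
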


 \begin{proof} By comparing   the rules of the lemma  with those  of Theorem \ref{T3}, we observe that: rule  I  coincides with  rule I for  $\widetilde \Upsilon$, rules II and IV  are obtained by  setting $\v=1$ in the  corresponding rules II  and III. Notice that, when the two components of the considered crossing are tied,  rule IV  becomes
 \begin{equation}\label{qp}
 \widetilde\Omega( L_{+,\sim})-\widetilde\Omega( L_{-,\sim})=0.
 \end{equation}
 Observe now   that  the necessity  of  rule III  for defining $\widetilde\Omega$, depends on the fact  that the skein rule IV
 does not involve the diagram $L_{\sim}$,  so  that  the value of $\widetilde\Omega$  on two unlinked circles  tied  together  cannot  be deduced, using skein rules; note this value is chosen by imposing that it coincide with the value obtained through  the Jones  recipe, and indeed  matches with (\ref{ccc}).     Rule III is in fact  the unique point  that makes the case $\v=1$  to be considered separately  from Theorem \ref{T3}.
 \end{proof}

  To present  the next result  we need to highlight  some facts and to introduce  some  notations.

   We start by recalling
   that the ties of a tied link define a partition of the set of components:   if there is a tie between two components, then  these components belong to the same class, see  \cite[Section 2.1]{aijuMathZ}.

   \begin{definition} \rm We  call    linking graph  of  a   link,  the $m$--graph  whose   vertices represent the $m$   components and  where two  vertices  are connected by  an edge   if  the corresponding components  have a  nonzero  linking  number.  Each edge is labeled  by  the  corresponding  linking number.
\end{definition}

   We  generalize the linking number  to  tied links.

\begin{definition} \rm We  call  class linking number  or c--linking number,  between  two  classes of components, the  sum of linking numbers of the components  of  one class  with  the components of the other class.
\end{definition}

  \begin{definition} \rm We  call     c--linking graph of  a tied link $\TL$,  the $k$--graph  whose   vertices represent the $k$ classes of the $\TL$ components  and  where  two  vertices  are connected by an edge  if  the corresponding classes  have a  nonzero  c--linking  number.  Each edge is labeled  by  the  corresponding c--linking number.
\end{definition}

\begin{example}\rm  The links  in Figure \ref{Fig2} have three  components: 1, 2 and 3,  and  two  classes,  $A= \{1,3\}$  and $B=\{2\}$.  All  crossings  have   positive sign.
The c--linking number between the  classes $A$ and $B$   is in both  cases equal to 2.   The corresponding c--linking  graph is shown at right.
\end{example}

   \begin{figure}[H]
\centering
\includegraphics[scale=0.8]{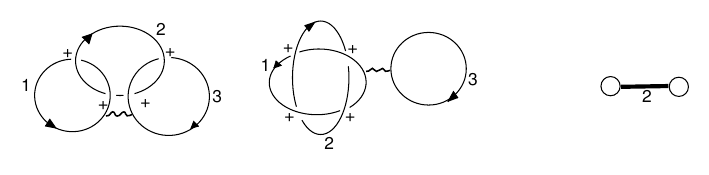}
\caption{}\label{Fig2}
\end{figure}

 \begin{remark}  \rm   For tied links in $\LLL$,  the c--linking graph coincides with the linking  graph.
 \end{remark}

 \begin{theorem} \label{T4} The invariant $\widetilde\Omega$  has the following properties:
\begin{enumerate}
\item The value of $\widetilde\Omega$ is equal to $1$  on knots.
\item $\widetilde\Omega$  takes  the same value  on  links  with  the same number of  components  all  tied together. The  value  depends only on the number of components  $m$, namely it is equal to  $(\b\sqrt{\u}/\a)^{m-1}$.
\item $\widetilde\Omega$  takes the same  value on tied links having   the same  number of  components and the same c--linking graph.
\end{enumerate}
\end{theorem}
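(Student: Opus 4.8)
My approach rests on the skein description of $\widetilde\Omega$ in Lemma~\ref{Omega}, together with the remark --- already used to obtain (\ref{qp}) --- that when the two strands entering the disc of Figure~\ref{ties3} belong to components of the \emph{same} tie--class (in particular to the same component), the tie added in $\widetilde L_{+,\sim}$ is redundant, so $\widetilde L_{+,\sim}=\widetilde L_+$ as tied links and rule~IV of the lemma degenerates to $\widetilde\Omega(\widetilde L_+)=\widetilde\Omega(\widetilde L_-)$. Thus a crossing change at any such ``internal'' crossing leaves $\widetilde\Omega$ unchanged. Since $\widetilde\Omega$ is a genuine invariant (Theorems~\ref{T2} and~\ref{T3}) and any link diagram becomes a descending --- hence trivial --- diagram after suitable crossing changes, this already settles (1) and (2): for a knot every crossing is internal, so its value equals $\widetilde\Omega(\bigcirc)=1$ by rule~I; for a link with all components tied, again every crossing is internal, so its value equals that of the trivial $m$--component link with all components tied, which by rules~I and~III --- start from the unknot and adjoin $m-1$ disjoint circles, each tied to a previous one, multiplying by $\b\sqrt\u/\a$ at each step --- equals $(\b\sqrt\u/\a)^{m-1}$; in particular (1) is the case $m=1$.

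For (3) I would induct on the number $k$ of tie--classes of $\widetilde L$, the case $k=1$ being~(2). (Note that the c--linking graph records $k$ but not the individual class sizes, so (3) also asserts insensitivity to the distribution of the $m$ components among the $k$ classes; this will fall out of the proof, since the only base values entering the computation are those of trivial links, which by rules~I--III are symmetric in the class sizes.) Fix $\widetilde L$ with $k\ge2$ classes and a diagram $D$; order its components with those of a common class consecutive, choose base points, and fix the corresponding descending target. Resolve the bad crossings of $D$ one at a time in a fixed order: a bad internal crossing is simply switched, changing neither $\widetilde\Omega$ nor $m$ nor the c--linking graph; at a bad crossing between components of two distinct classes $A_i,A_j$ apply rule~IV, obtaining
\[
\widetilde\Omega(\widetilde L)=\u^{\mp1}\,\widetilde\Omega(\widetilde L')+(1-\u^{\mp1})\,\widetilde\Omega(\widetilde L''),
\]
where $\widetilde L'$ is $D$ with that crossing switched --- still $k$ classes, same $m$, c--linking graph obtained from that of $\widetilde L$ by changing only the label of the edge $\{A_i,A_j\}$ by $\pm1$ --- and $\widetilde L''$ is $D$ with a tie inserted across that crossing, hence a tied link with $k-1$ classes whose c--linking graph is the contraction of that of $\widetilde L$ identifying $A_i$ with $A_j$. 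By the inductive hypothesis $\widetilde\Omega(\widetilde L'')$ depends only on $m$ and on this contracted graph, and in particular stays constant while one keeps resolving crossings between the same pair of classes. Iterating, $D$ is brought to a descending diagram $D^\star$, i.e. the trivial $m$--component link with the prescribed partition into $k$ classes, whose value, by rules~I--III, is $(\b\sqrt\u/\a)^{m-k}(\sqrt\u/\a)^{k-1}$; thus $\widetilde\Omega(\widetilde L)$ is expressed as a $\ZZ[\u^{\pm1}]$--combination of $\widetilde\Omega(D^\star)$ and of the $\widetilde\Omega$--values of the intermediate ``merged'' tied links, all of which are functions of $m$ and of the c--linking graph $\Gamma$ of $\widetilde L$.

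The step I expect to be the real obstacle is showing that the coefficients of this combination depend only on $(m,\Gamma)$ and not on $D$ or on the order of resolution --- that is, that the procedure is well defined. This is not a term--by--term commutativity statement (resolutions at crossings between different pairs of classes do not commute), but it is forced by the already established invariance of $\widetilde\Omega$; concretely, the exponents $\pm1$ of $\u$ attached to the crossings between a fixed pair $A_i,A_j$ sum, by a writhe / linking--number count, to $-n_{ij}$, the label of that edge of $\Gamma$ (which is $0$ for $D^\star$), so the coefficient of $\widetilde\Omega(D^\star)$ is $\u^{-\sum n_{ij}}$, and an analogous accounting controls the coefficients of the merged terms. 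This is the same diagram--independence bookkeeping carried out in the proofs that $\widetilde\Upsilon$ --- equivalently $\widetilde\Delta$ and $\widetilde\Theta$ --- is an invariant in \cite{aijuJKTR1,aijuMMJ1,chjukalaIMRN}, and I would adapt it here. Granting it, $\widetilde\Omega(\widetilde L)$ is a function of $(m,\Gamma)$ alone, which is~(3).
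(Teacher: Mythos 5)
Your treatment of claims (1) and (2) is correct and coincides with the paper's: rule (\ref{qp}) makes internal crossing switches invisible, and rules I--III evaluate the resulting trivial diagrams. Your strategy for (3) is also the same in outline as the paper's (switch inter--class crossings via rule IV of Lemma \ref{Omega}, reduce to trivial tied links, control coefficients by c--linking numbers), but the step you yourself flag as ``the real obstacle'' --- that the coefficients, in particular those of the merged terms, depend only on $(m,\Gamma)$ --- is precisely the content of the paper's proof, and your sketch does not close it. Appealing to the invariance of $\widetilde\Omega$ only guarantees that the final value is well defined; it does not show that the value is a function of $(m,\Gamma)$, which is the statement being proved. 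Moreover, the intermediate claim in your induction is not accurate as stated: if resolutions at crossings of different pairs are interleaved, the merged link $\widetilde L''$ produced at a later stage is a contraction of the c--linking graph of the \emph{intermediate} diagram, whose edge labels have already been altered by earlier switches (e.g.\ contracting at $\{A_i,A_x\}$ after switching an $(A_i,A_j)$ crossing changes the label of the edge joining the merged class to $A_j$ by $\pm1$), so these terms are not visibly ``functions of $m$ and of the c--linking graph $\Gamma$ of $\widetilde L$'' by the inductive hypothesis alone.

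The paper fills exactly this gap by organizing the resolution pair--by--pair rather than crossing--by--crossing: within one pair of classes $(i,j)$ all the deciding crossings (the undercrossings of one class under the other, with signs $s_1,\dots,s_n$, $\sum_t s_t=\ell(i,j)$) are resolved consecutively; all the merged diagrams so produced are the \emph{same} tied link $\TL^{i\sim j}$ up to crossing switches that are internal after the merge, hence have equal $\widetilde\Omega$--value by (\ref{qp}), and the telescoping identity $\sum_{t=1}^n \u^{-(s_1+\dots+s_{t-1})}(1-\u^{-s_t})=1-\u^{-\ell(i,j)}$ collects their coefficients. This yields the generalized skein relation (\ref{skeinclass}), $\OO(\TL)=\u^{-\ell(i,j)}\OO(\TL^{i\parallel j})+(1-\u^{-\ell(i,j)})\OO(\TL^{i\sim j})$, whose coefficients and whose two resulting links are manifestly governed by $\Gamma$ (edge deleted, resp.\ contracted). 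Iterating over the $p$ linked pairs produces a tree with $2^p$ leaves, each a trivial tied link of value $(\sqrt\u/\a)^{m-1}\b^{m-h}$ with $h$ read off the graph, and with coefficients that are products of the factors $\u^{-\ell(x,y)}$ and $1-\u^{-\ell(x,y)}$; only then is the dependence on $(m,\Gamma)$ alone evident. If you reorganize your resolution order so that each pair of classes is processed completely before passing to the next, and prove the telescoping/merging identity above (rather than deferring to ``analogous accounting'' or to the invariance proofs of $\widetilde\Upsilon$), your argument becomes essentially the paper's.
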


\begin{proof}   Rule (\ref{qp})  implies that,  given  any knot diagram,
 $\widetilde\Omega$  takes the same  value on any other diagram obtained  by changing any  crossing   from positive to  negative or  viceversa.  Thus,  it takes the same value also on the  diagram  corresponding to the unknot:  by  rule I  this value is equal to 1.  This proves   claim (1).

Claim  (2) is a consequence of   rule (\ref{qp})  together with  rule III of Lemma \ref{Omega}.

Suppose the tied link $\widetilde L$ has  $m$ components,   partitioned into $k$ classes.  We order arbitrarily  the classes,  and inside each class, using rule (\ref{qp}),  we change the signs of some crossings in order to unlink the components   and  transform each component into the unknot.  Then we start from the first class $c_1$ and consider in their order all the other classes $c_i$ linked with it:  we mark  all  the undercrossing of $c_1$ with $c_i$   as {\it deciding} crossings.  Then we pass to  the   class $c_2$, we select  all classes $c_j$ linked with it and having   indices greater than 2, and   mark  the  undercrossings of  $c_2$   with $c_j$, so   increasing the list of  deciding  crossings. We proceed this way  till  the last  class.   At the end  we  have obtained an ordered sequence  of $q$ pairs of classes  characterized  by the corresponding   c--linking numbers. So,  we construct  a graph with $k$ vertices, and $q$ ordered  edges,  labeled with the  c--linking numbers.

Consider now the first pair of classes $(i,j)$ in the sequence. We apply  the  skein  rule  IV  of Lemma \ref{Omega}, to each one of the $n$ {\it deciding} crossings between  the components of this pair. These points  have signs $s_1,\ldots,s_n$. By  using  rule (\ref{qp}), rule IV becomes, respectively  for positive  and  negative crossings,    $$ \widetilde\Omega( L_ +)=\frac{1}{\u}\widetilde\Omega(L_-) +    ( 1-\frac{1}{\u}) \widetilde\Omega(L_{-,\sim} )\quad	 \text{and} \quad
    \widetilde\Omega( L_ -)= \u \widetilde\Omega(L_+) +     ( 1-\u   ) \widetilde\Omega(L_{+,\sim}).  $$
 So, consider the first deciding point with signs $s_1$. We have
 $$\widetilde\Omega(L_{s_1})=  \u^{-s_1}\widetilde\Omega( L_{-s_1})+( 1-\u^{-s_1}   ) \widetilde\Omega(L_{-s_1,\sim}).$$
The  two diagrams at the right member   are identical, but   in the second one there is a tie between the classes $i$ and $j$. We denote this diagram by  $L^{i\sim j}$; observe that in this diagram the classes  $i$ and $j$ merge in a sole class.

To calculate  the first term $\u^{-s_1}\widetilde\Omega( L_{-s_1})$, we pass to the second deciding point, so obtaining
 a first term  $\u^{-(s_1+s_2)}\widetilde\Omega( L_{-s_2}) $, and a second term  $\u^{-s_1}(1-\u^{-s_2} ) \OO(\TL^{i\sim j})$.  At the $n$-th  deciding point,  we obtain
$$\widetilde\Omega( L )= \u^{-(s_1+s_2+ \dots + s_n)} \widetilde\Omega ( L_{ -s_n})+
\sum_{i=1}^n \u^{-(s_0+\dots + s_{i-1})} (1-\u^{-s_i})  \OO(\TL^{i\sim j}),  $$
where $s_0=0$.  Now, $L_{-s_n}$ is the link obtained by $\TL$ by unlinking the classes $i$ and $j$, that we shall denote by $\TL^{i\parallel j}$.
By expanding the sum we obtain
$$\sum_{i=1}^n \u^{-(s_0+\dots + s_{i-1})} (1-\u^{-s_i})= 1- \u^{-(s_1+s_2+ \dots + s_n)} . $$
The  sum $s_1 + \dots + s_n$  is  the  sum of the signs  of all  undercrossings,  and therefore equals  the c--linking number of the two  classes, that we denote by $\ell(i,j)$. Therefore we get
\begin{equation}\label{skeinclass} \OO(\TL)= \u^{-\ell(i,j)} \OO(\TL^{i\parallel j}) +(1-\u^{-\ell(i,j)})\OO(\TL^{i\sim j}).  \end{equation}
\begin{figure}[H]
\centering
\includegraphics[scale=0.9]{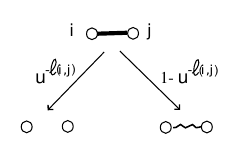}
\caption{Generalized skein rule}\label{Fig3}
\end{figure}

Observe now  that  Eq. (\ref{skeinclass}) is a {\it  generalized} skein relation, that is used to unlink  two  classes of components (or two components,  when the classes  contain a sole component), see Fig. \ref{Fig3}.   The  independence of the   calculation  by skein of  $\widetilde \Upsilon$   from the order of the deciding points,  implies  here the  independence of  the calculation of $\OO$  by  the generalized skein equation (\ref{skeinclass}) from the  order  of  the pairs of classes.

  However,  Eq. (\ref{skeinclass}) becomes $ \OO(\TL)= \OO(\TL^{i\parallel j})$  when the c--linking  number is  zero.  Therefore,  the invariant $\OO$  does  not  distinguish  two linked  classes with c--linking number zero  from  the same  classes   unlinked. So, we  obtain  the  c--linking  graph by deleting the   edges  labeled  by zero in the  graph  before  constructed. Observe  that, if  there  remain   $p$   edges   with non zero label, the  generalized  skein relation (\ref{skeinclass})  defines  a tree  terminating in    $2^p$  diagrams $\TL_j$, all  having the  classes  unlinked. These diagrams differ only for a  certain number of ties,  and each one of them  can be  represented  by a  graph obtained from the  c-linking  graph   where  each  edge  is either  deleted or substituted by a  tie.
 The value of $\OO(\TL)$ is  then the sum
 \begin{equation}\label{OO} \sum_{j=1}^{2^p} \alpha_j \OO(\TL_j).\end{equation}
 Notice that  each vertex of the tree is labeled by a pair $(x,y)$ of classes, that is, the classes that are unlinked by the skein rule at that vertex,  To calculate the coefficient $\alpha_j$,    consider  all the $p$  vertices of    the   path in the skein tree, going from $\TL_j$ to  $\TL$.  For  each one of these vertices,  say with label $(x,y)$,   choose the factor   $\u^{-\ell(x,y)}$  if it is reached from left, otherwise the factor  $(1-\u^{-\ell(x,y)})$.  The coefficient $\alpha_j$ is the product of   these  $p$ factors.

 The value   $\OO(\TL_j)$    depends only on the  number $m$ of components,   and on the  number of classes  $h$, $h\le k$, of $\TL_j$;  indeed, by    rules  II and III of Lemma \ref{Omega}  we have:
\begin{equation}\label{mc}\widetilde\Omega(\TL_j)= \left(\frac{\sqrt \u}{\a}\right)^{m-1} \b^{m-h}.  \end{equation}

  To calculate   $h$ for the diagram $\TL_j$, we start from  the c--linking graph of $\TL$, and use again  the $p$ vertices of the considered path in the   skein  tree:
    if  the path reaches a vertex labeled $(x,y)$ from left,  then the edge $(x,y)$    is eliminated  from the graph, otherwise  the edge is  substituted by a  tie. The  number of connected components of the graph so obtained,  having ties as edges, is  the resulting number $h$ of  classes, e.g. see Fig. \ref{Fig5}.

  To  conclude the proof,  it is now sufficient to observe that the  calculation of $\OO(\TL)$   depends only  on the  c--linking graph  and on the total number of components of $\TL$.

\end{proof}

\begin{corollary} Let  $\TL$  be a tied link  with $m$ components and $k$ classes. Let $r$ be the  exponent of $\a$   and $s$  the  minimal exponent of $\b$ in  $\OO(\TL)$. Then $m=1-r$ and $k=1-r-s$.
\end{corollary}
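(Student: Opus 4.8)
The plan is to read off the exponents of $\a$ and $\b$ directly from the formula for $\widetilde\Omega(\TL)$ assembled in the proof of Theorem \ref{T4}. Recall from Eq. (\ref{OO}) that $\OO(\TL)=\sum_{j=1}^{2^p}\alpha_j\,\OO(\TL_j)$, where each coefficient $\alpha_j$ is a product of factors of the form $\u^{-\ell(x,y)}$ or $(1-\u^{-\ell(x,y)})$, hence a Laurent polynomial in $\u$ alone, carrying no $\a$ or $\b$. Thus every power of $\a$ and $\b$ appearing in $\OO(\TL)$ comes from the terms $\OO(\TL_j)$, which by Eq. (\ref{mc}) equal $(\sqrt\u/\a)^{m-1}\b^{m-h_j}$, where $m$ is the (common) number of components and $h_j$ is the number of classes of the terminal diagram $\TL_j$.

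First I would observe that the exponent of $\a$ is the same in every term $\OO(\TL_j)$, namely $-(m-1)$, since $m$ is an invariant of $\TL$ and is not changed by the generalized skein moves (these only delete edges or replace them by ties in the c--linking graph, never altering the underlying set of components). Hence $\a$ occurs in $\OO(\TL)$ with the single exponent $r=-(m-1)=1-m$, giving $m=1-r$. This uses only that the $\alpha_j$ are independent of $\a$, so no cancellation among the $2^p$ terms can change this exponent.

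Next I would identify the minimal exponent of $\b$. Across the terminal diagrams the exponent of $\b$ is $m-h_j$, and since $h_j\le k$ with equality attained exactly when $\TL_j$ is the diagram in which \emph{every} pair of linked classes has been unlinked (the leaf reached by choosing the ``left'' branch $\u^{-\ell(x,y)}$ at every vertex of the skein tree), the maximal value of $h_j$ is $k$, so the minimal exponent of $\b$ among the $\OO(\TL_j)$ is $m-k$. The one remaining point is to rule out that this minimal-degree contribution is killed by cancellation: the leaf with $h_j=k$ is reached by a unique path in the skein tree, so it occurs in (\ref{OO}) exactly once, with coefficient $\alpha_j=\prod_{(x,y)}\u^{-\ell(x,y)}$, a nonzero monomial; therefore the $\b^{m-k}$ term survives and $s=m-k$, whence $k=m-s=(1-r)-s=1-r-s$.

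The only genuine obstacle is this non-cancellation argument for the $\b$-exponent: one must be sure that no other terminal diagram also has $h_j=k$ and could combine with a compensating sign. This follows because a class $h_j=k$ forces every edge of the c--linking graph of $\TL$ to have been deleted rather than tied, which pins down the branch choice at each of the $p$ vertices and hence the leaf uniquely; since each $\alpha_j$ is a product of the nonzero quantities $\u^{-\ell(x,y)}$ and $1-\u^{-\ell(x,y)}$ (nonzero as elements of $\CC(\u)$), the surviving coefficient is a nonzero monomial in $\u$, and the claim follows.
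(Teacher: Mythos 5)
Your argument is correct and follows essentially the same route as the paper, which deduces the corollary directly from Eq.~(\ref{OO}) and Eq.~(\ref{mc}) together with the observation that the coefficients $\alpha_j$ involve only $\u$. The only difference is that you spell out the non-cancellation of the $\b^{m-k}$ term (via the unique all-``left'' leaf of the skein tree with coefficient $\prod\u^{-\ell(x,y)}\neq 0$), a point the paper leaves implicit.
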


\begin{proof}  It follows from Eq. (\ref{OO})  and (\ref{mc}),  noting that  the coefficients $\alpha_i$ depend only on the variable $\u$.
\end{proof}

  We shall denote   by $\Omega$ the specialization of $\Upsilon$ at $\v=1$, that is,    $\Omega$ is   the restriction of $\widetilde{\Omega}$ to $  \mathfrak{L}$. We have the following results for classical links.

\begin{corollary} The invariant $\Omega$  has the following properties:
  $\Omega$  takes  the same value    on  links having   the same   linking graph. If $L$ has $m$ components, the exponent of $\a$ in $\Omega$ is  $1-m$ and there is a term in $\Omega$  non containing $\b$.
\end{corollary}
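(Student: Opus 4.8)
The plan is to read off all three assertions from Theorem~\ref{T4} and the preceding Corollary, after one preliminary observation: a classical link $L\in\LLL$, regarded as an element of $\widetilde{\LLL}$, carries no ties, so its $m$ components form exactly $m$ singleton classes. Thus for $L$ the number of classes equals the number of components, $k=m$, and the c--linking graph of $L$ is literally its ordinary linking graph (as already noted in the Remark preceding Theorem~\ref{T4}).

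First I would treat the dependence on the linking graph. If two links $L,L'$ have the same linking graph, then in particular they have the same number of components (the number of vertices of the graph); viewing them in $\widetilde{\LLL}$, their c--linking graphs coincide with their linking graphs, hence with each other, and Theorem~\ref{T4}(3) gives $\widetilde\Omega(L)=\widetilde\Omega(L')$, that is $\Omega(L)=\Omega(L')$.

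For the remaining two assertions I would apply the preceding Corollary to $\TL=L$, using $k=m$: it gives $m=1-r$, so the exponent of $\a$ in $\widetilde\Omega(L)=\Omega(L)$ is $r=1-m$; and it gives $k=1-r-s$, which together with $k=m$ and $r=1-m$ forces $s=0$, i.e.\ the minimal exponent of $\b$ in $\Omega(L)$ is $0$, so $\Omega(L)$ does contain a term free of $\b$. If one wishes to exhibit that term explicitly, it is the contribution of the unique leaf of the skein tree built in the proof of Theorem~\ref{T4} that is reached by always choosing the unlinking branch of (\ref{skeinclass}): that leaf has all $m$ classes unlinked with no ties, so $h=m$, and by (\ref{mc}) it contributes $(\sqrt{\u}/\a)^{m-1}\b^{0}$ with coefficient $\prod_{(x,y)}\u^{-\ell(x,y)}\neq 0$; since every edge of the c--linking graph gets processed in the tree, this is the only leaf with $h=m$, so no cancellation occurs.

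There is essentially no obstacle beyond bookkeeping. The one point to handle with care is the passage from $L\in\LLL$ to its image in $\widetilde{\LLL}$ — the fact that this image has $k=m$ classes with c--linking graph equal to the linking graph — since it is exactly this that upgrades the general ($k\le m$) statements of Theorem~\ref{T4} and of the preceding Corollary to the sharp ones claimed here; and, in the optional explicit computation, the small verification that the all--unlinking leaf is the unique leaf with $h=m$.
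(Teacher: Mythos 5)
Your proof is correct and follows essentially the same route as the paper, which simply cites Theorem \ref{T4} and the preceding corollary; your only addition is to make explicit the observation that a classical link has $k=m$ singleton classes with c--linking graph equal to its linking graph, which is exactly the intended reduction. The optional identification of the $\b$--free term via the all--unlinking leaf of the skein tree is a harmless elaboration, not a departure from the paper's argument.
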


\begin{proof}  It follows from Theorem 4 and Corollary 1.
\end{proof}

\begin{example} \rm Consider  the   link  $ L$ in Figure \ref{Fig4}. Here $m=3$, $c=3$ and    $p=3$.
All linking numbers  $\ell(i,j)$  are equal to 1.
\begin{figure}[H]
\centering
\includegraphics[scale=0.9]{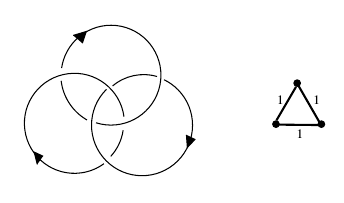}
\caption{A link and its linking graph}\label{Fig4}
\end{figure}
The value $\Omega(L)$ is  obtained   by  adding   the   value of $\Omega$ on  the  $2^3$ graphs shown in  Figure \ref{Fig5}, where they are subdivided   in four groups, according to  the value of $\Omega$, i.e., to the number of classes, indicated  at bottom.
The coefficients, here written for  the four  groups, are:
$$    \u^{-3} , \quad     \u^{-2}(1-\u^{-1}),  \quad    \u^{-1}(1-\u^{-1})^2 \quad    \text{and} \quad  (1-\u^{-1})^3,  $$
whereas the corresponding values of $\Omega$  are
$$     \u/ \a^2,   \quad   \b \u/ \a^2 ,   \quad  \b^2 \u/ \a^2  \quad   \text{and} \quad  \b^2   \u/ \a^2. $$
  Then,
$\Omega(L)= \u\a^{-2}( \u^{-3} + 3 \b(\u^{-2}(1-\u^{-1}) )+ 3\b^2 (\u^{-1}(1-\u^{-1})^2 )+ \b^2(1-\u^{-1})^3 )$,  so
$$
\Omega(L)=  \a^{-2}\u^{-2} (1+3\b \u-3\b-3\b^2\u+2\b^2+\b^2\u^3).
$$
\begin{figure}[H]
\centering
\includegraphics[scale=0.9]{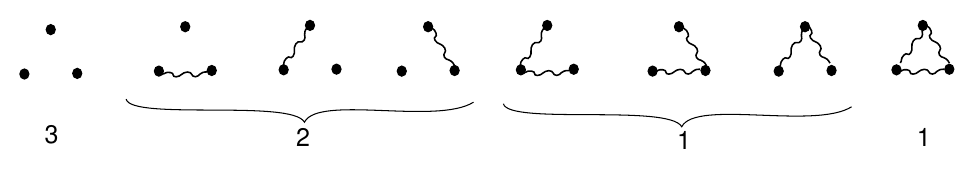}
\caption{The  eight graph obtained from the  c--linking  graph  of  Fig. \ref{Fig4}}\label{Fig5}
\end{figure}
Finally, observe that   $r=-2$ and $s=0$; indeed,  $L$  has 3 components and 3 classes.
\end{example}

 \section{Results of calculations}\label{Computations}

 Here the  notations   of the links with ten or eleven  crossings    are taken from \cite{chli}.

 The  following table  shows eight pairs of non isotopic links with three  components,  distinguished by $\Upsilon(\u,\v)$, but non distinguished by  the Homflypt polynomial.  A star indicates when they are   distinguished   also  by a specialization of $\Upsilon(\u,\v)$.


 $$  \begin{matrix}
 \hline     Link & l. graph& Link & l. graph &\Upsilon(\u,\v)\ & \Upsilon(1,\v) &   \Upsilon(\u,\u) &      \Upsilon(\u,1)\\ \hline
 L11n358  \{0,1\} &\includegraphics[scale=0.1]{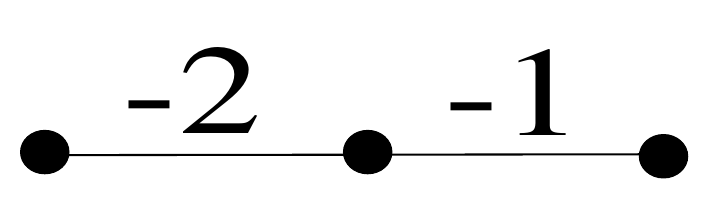} & L11n418  \{0,0\}& \includegraphics[scale=0.1]{G1ab.pdf}& \star & \star  &      &       \\ \hline
 L11n358  \{1,1\} &\includegraphics[scale=0.1]{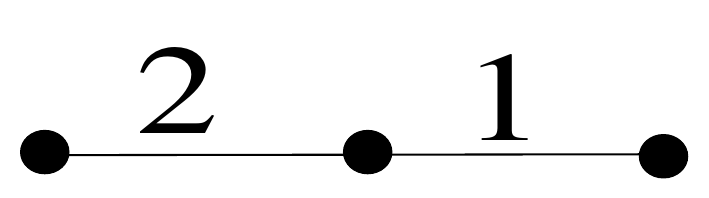} & L11n418  \{1,0\}& \includegraphics[scale=0.1]{G2ab.pdf}& \star &   & \star     &       \\ \hline
  L11n356  \{1,0\}&\includegraphics[scale=0.1]{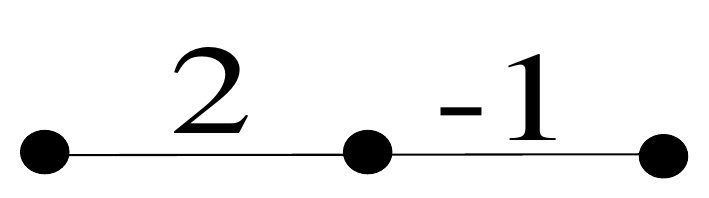}  & L11n434  \{0,0\}&\includegraphics[scale=0.1]{G3ab.pdf} &  \star & &  \star      &    \\ \hline
  L11n325  \{1,1\}& \includegraphics[scale=0.1]{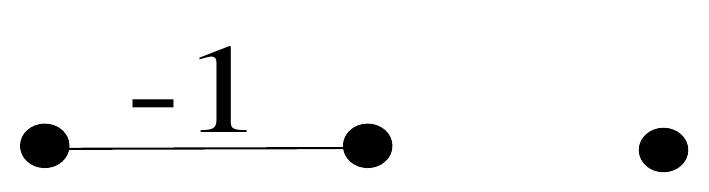} & L11n424  \{0,0\}& \includegraphics[scale=0.1]{G4ab.pdf}&  \star & \star   &  \star     &   \\ \hline
  L10n79  \{1,1\} & \includegraphics[scale=0.1]{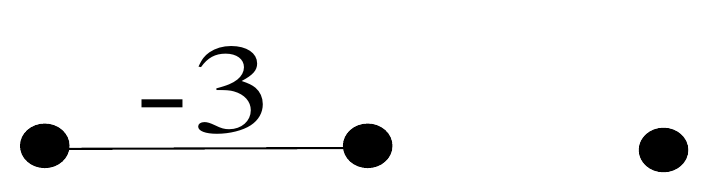} & L10n95  \{1,0\} & \includegraphics[scale=0.1]{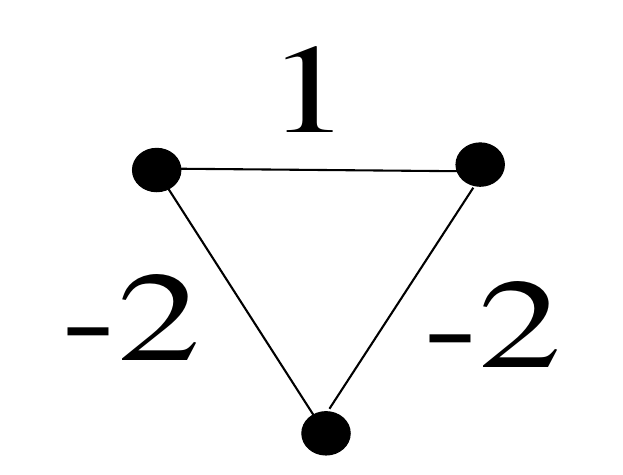}&  \star   &  \star  &  \star      &  \star \\ \hline
   L11a404  \{1,1\} &\includegraphics[scale=0.1]{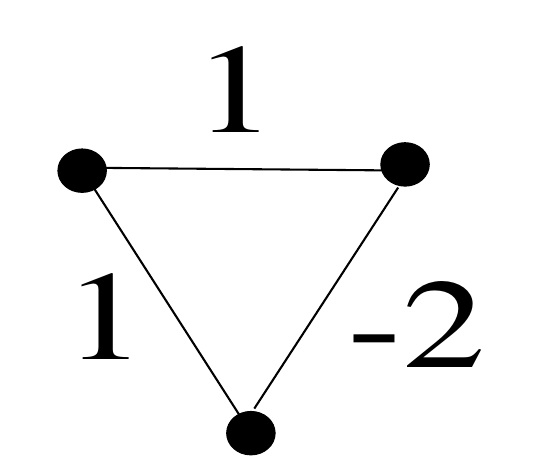} &L11a428  \{1,0\} & \includegraphics[scale=0.1]{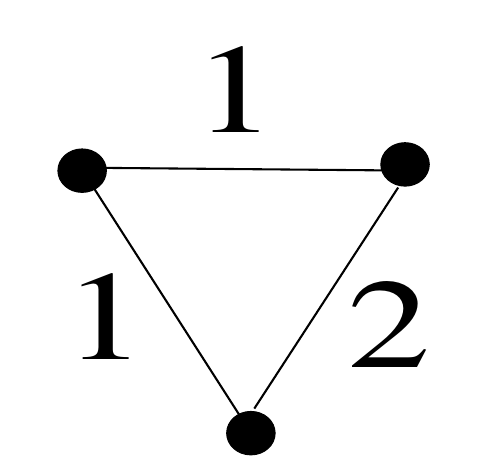}&   \star  &  \star  &  \star   & \star \\ \hline
  L11a467  \{0,1\}&\includegraphics[scale=0.1]{G1ab.pdf}  & L11a527 \{0,0\} &  \includegraphics[scale=0.1]{G1ab.pdf} & \star & \star  & &   \\  \hline
  L10n76\{1,1\}   & \includegraphics[scale=0.1]{G5a.pdf}&  L11n425\{1,0\} & \includegraphics[scale=0.1]{G5b.pdf} &\star   & \star   &  \star  & \star  \\  \hline
   \end{matrix}$$

   Observe  that,  among the  eight pairs  distinguished by  $ \Upsilon(\u,\v)$,  six  are distinguished by $ \Upsilon(\u,\u)$,   six by  $ \Upsilon(1,\v)$;  the pair  distinguished  by  both $\Upsilon(\u,\u)$ and $ \Upsilon(1,\v)$ are four, three  of which   are  distinguished  also  by  $\Upsilon(\u,1)$. We don't know whether it is  necessary, for being  distinguished by  $\Upsilon(\u,1)$, to  be  distinguished by  all other  specializations.

\section{Final  Remarks}\label{Digression}
\subsection{}
Recently, Jacon and Poulain d'Andecy have constructed an explicit isomorphism between the Yokonuma--Hecke algebra and a direct sum of matrix algebras over tensor products of Iwahori--Hecke algebras, also they  have classified  the Markov traces on the Yokonuma--Hecke algebra,  see \cite[Theorems  3.1, 5.3]{JaPoMathZ}. In the same paper they defined an invariant of three parameters and have shown that the invariants $\Theta$ and $\Delta$ can be obtained from it, see \cite[Subsection 6.5]{JaPoMathZ}.  In \cite[Appendix]{chjukalaIMRN}  Lickorish found a formula  for $\Theta$ which allows to compute  $\Theta(L)$ through the linking numbers and the Homflypt polynomials of the sublinks of the oriented link $L$, cf.  \cite{PoWaProc}.
On the other hand, in \cite[Theorem 14]{esryJPAA} Espinoza and Ryom--Hansen proved that the bt--algebra can be considered as a subalgebra of the Yokonuma--Hecke algebra; cf. \cite{jaPoJLMS}.  This result together with the results  of Jacon and Poulain d'Andecy and  the formula for $\Theta$ induce to think that some of the  results of this paper, at level of classical links, could be recovered by a combination of the results mentioned before. However, this combination do not imply  the results proved here for tied links.

An open problem yet is to know how strong is the  four variable  invariant  $\Upsilon$ (respectively, $\widetilde \Upsilon$)   with respect to $\Theta$ and $\Delta$  (respectively,  $\widetilde{\Theta}$ and $\widetilde{\Delta}$).

\subsection{}
Denote by  $\mathrm{H}_n(\u)$ the Hecke algebra, that is,  the $\CC(\u)$--algebra generated by $h_1, \ldots , h_{n-1}$ subject to the braid relations of type $A$,  together with  the quadratic relation
$$
h_i^2 = \u + (\u-1)h_i,\quad\text{for all $i$.}
$$
Now, there exits  another presentation used to describe the Hecke algebra, which is obtained by  rescaling  $h_i$ by $\sqrt{\u}^{\,-1}$; more precisely, taking $f_i:= \sqrt{\u}^{\,-1}h_i$.  In this case  the  $f_i$'s satisfy the braid relations and the quadratic relation
$$
f_i^2 = 1 + (\sqrt{\u} - \sqrt{\u}^{\,-1}) f_i.
$$
Denote by $\mathrm{H}_n(\sqrt{\u})$ the presentation of the Hecke algebra through the $f_i$'s.   The construction of the Homflypt  polynomial can be made indistinctly from any of the  above presentations for the Hecke algebra.

The bt--algebra can be regarded as a generalization of the Hecke algebra, in the sense that, by taking $E_i=1$ in the presentation of the bt--algebra,  we get the Hecke algebra; indeed, under $E_i=1$ the presentations, respectively,  of  $\E_n(\u)$ and  $\E_n(\sqrt{\u})$  becomes
 $\mathrm{H}_n(\u)$ and    $\mathrm{H}_n(\sqrt{\u})$. Now  we recall that, as we noted in  observation 5 of  Subsection \ref{NotationFact}, these two presentations  of the bt--algebra  yield  different invariants. The authors don't know other situations where different presentations of the same algebra produce  different invariants.

\subsection{}  Also the Hecke algebra with two parameters can be   considered; that is, by taking  two commuting parameters $\u_1$ and $\u_2$, and  imposing that   that the generators $h_i$'s satisfy   $h_i^2 = \u _1+ \u_2 h_i$, for all $i$; however, the   Hecke algebras with  one  and two    parameters are isomorphic, see \cite[Subsection 4.2]{katu}; hence, from the algebraic point of view  these algebras are the same. Now,  regarding the   behavior of the Hecke algebra with two parameters   $\mathrm{H}_n(\u_1, \u_2)$,  in the construction of polynomial invariants,  we have that, after suitable rescaling,   $\mathrm{H}_n(\u_1, \u_1)$ becomes  of the type $\mathrm{H}_n(\sqrt{\u})$  and $\mathrm{H}_n(\u_1, 0)$ becomes    the group algebra of the symmetric group. For
$\mathrm{H}_n(0, \u_2)$, we   obtain the so--called $0$--Hecke algebra.

We examine  now the bt--algebra with   one more parameter. Taking  $\u_0,\u_1,\u_2$ and $\u_3$ commuting variables, it is natural to keep generators $R_i$'s instead the $T_i$'s, satisfying $R_i^2= \u_0 + \u_1E_i + \u_2E_iR_i +\u_3 R_i$, for all $i$; notice that a simple rescaling shows that we can   take $\u_0=1$. Now, we need that these $R_i$'s, together with the $E_i$'s, satisfy all defining relations of the bt--algebra with the only exception of relation (\ref{quadraticT}); it is straightforward  to see that these defining relations hold  if and only if we take $\u_3=0$. This  is the motivation for defining  the  bt--algebra   $\En(\u,\v)$  with two parameters in this paper.  Observe that we have a homomorphism from  $\En(\u,\v)$ onto $\mathrm{H}_n(\u, \v-1)$,  defined by sending $E_i$ to $1$ and $R_i$ to $h_i$; so, the $0$--Hecke algebra is the homomorphic image of $\E_n(0,\v)$.

  A  natural  question is why do not study a further  generalization  of  the bt--algebra,  say with three   variables  $\u$, $\v$ and $\w$   commuting among them, and  with  quadratic  relation $$
R_i^2 = 1 +(\u-1)E_i + (\v-1)E_iR_i + (\w-1)R_i,\quad \text{for all $i$}.
$$  In fact,  we  have proved  that this relation  is not compatible  with  some   monomial  relations of  the  bt--algebra.

\end{document}